\newtheoremstyle{thmstyle}
  {1.2\baselineskip}         
  {.5\baselineskip}         
  {\itshape}    
  {0pt}         
  {\bfseries}   
  {}            
  {.5em}        
  {}            
\newtheoremstyle{claimstyle}
  {.8\baselineskip}         
  {0\baselineskip}         
  {\itshape}    
  {0pt}         
  {\bfseries}            
  {}            
  {.5em}        
  {}            
\theoremstyle{thmstyle}
\newtheorem{theorem}{Theorem}
\newtheorem{lemma}[theorem]{Lemma}
\newtheorem{corollary}[theorem]{Corollary}
\newtheorem{conjecture}[theorem]{Conjecture}
\newtheorem{observation}[theorem]{Observation}
\newtheorem*{question*}{Question}
\theoremstyle{claimstyle}
\newtheorem{claim}{\emph{Claim}}
\newtheorem*{claim*}{\emph{Claim}}
\newtheorem{case}{Case}
\renewenvironment{proof}{\setcounter{claim}{0}\setcounter{case}{0}{\noindent \textbf{\textit{Proof.}}}}
{\hfill $\Box$\vspace*{0.1in}}
\newenvironment{proofof}[1]{{\noindent \textbf{\textit{Proof of #1.}}}}
{\hfill $\Box$\vspace*{0.1in}}
\newcommand{\vs}{\vspace*{.1in}}
\long\def\symbolfootnote[#1]#2{\begingroup\def\thefootnote{\fnsymbol{footnote}}
\footnote[#1]{#2}\endgroup}
\def\sm{\setminus}
\def\K{\vec{K}}
  \def\vertexscale{1}
  \def\arrowscale{1.2}
  \def\arrowstyle{latex'} 
  \newcommand{\drawarc}
  {
    \draw[
      decoration={
        markings, 
        mark=at position 0.65 with {\arrow[scale=\arrowscale]{arrowstyle symmetric}};,
      },
      postaction=decorate
    ]
  }
  \newcommand{\drawdigon}
  {
    \draw[
      decoration={
        markings, 
        mark=at position 0.35 with {\arrowreversed[scale=\arrowscale]{arrowstyle symmetric}};,
        mark=at position 0.65 with {\arrow[scale=\arrowscale]{arrowstyle symmetric}};,
      },
      postaction=decorate
    ]
  }
  \newcommand{\drawvertex}[1]{\path #1 coordinate (vtemp);\filldraw[scale=\vertexscale] (vtemp) circle (0.05)}
\begin{document}

\title{Immersing complete digraphs}
\author{
Matt DeVos \footnote{mdevos@sfu.ca. Supported in part by an NSERC Discovery Grant (Canada) and a Sloan Fellowship.}\\
Jessica McDonald\footnote{jessica\textunderscore mcdonald@sfu.ca. Supported by an NSERC Postdoctoral Fellowship (Canada)}\\
Bojan Mohar \footnote{mohar@sfu.ca. Supported in part by an NSERC Discovery Grant (Canada), by the Canada Research Chair program, and by the Research Grant P1--0297 of ARRS (Slovenia). On leave from: IMFM \& FMF, Department of Mathematics, University of Ljubljana, Ljubljana, Slovenia.}\\
Diego Scheide \footnote{dscheide@sfu.ca}\\
\medskip\\
Department of Mathematics\\
Simon Fraser University\\
Burnaby, B.C., Canada V5A 1S6
}
\date{}

\maketitle

\bigskip

\begin{abstract}
We consider the problem of immersing the complete digraph
on $t$ vertices in a simple digraph. For Eulerian digraphs, we show that such an immersion always exists whenever minimum degree is at least
$t(t-1)$, and for $t \le 4$ minimum degree at least $t-1$ suffices.  On the other hand, we show that there exist non-Eulerian digraphs with all vertices of arbitrarily high in- and outdegree which do not contain an immersion of the complete digraph on $3$ vertices. As a side result, we obtain a construction of digraphs with large outdegree in which all cycles have odd length, simplifying a former construction of such graphs by Thomassen.
\end{abstract}

\section{Introduction}

In this paper all digraphs are finite and may have loops and multiple edges. A digraph $D$ is \emph{simple} if $D$ has no loops and there is at most one edge from $x$ to $y$ for any $x,y \in V(D)$. Note that oppositely oriented edges between two vertices are allowed in simple digraphs, and such a pair of edges is called a \emph{digon}.  The \emph{complete digraph} of order $t$, denoted $\K_t$, is a digraph with $t$ vertices and a digon between each pair of vertices.  The rest of our terminology is fairly standard, as used in \cite{BJ} or \cite{BM}.

A directed path of length two from a vertex $x$ to a vertex $z$ in a digraph $D$, say $xyz$, can be \emph{split off} by deleting the directed edges $xy$ and $yz$ and adding the directed edge $xz$ (possibly in parallel to existing directed edges). A digraph $F$ is said to be \emph{immersed} in a digraph $D$ if (a digraph isomorphic to) $F$ can be obtained from a subgraph of $D$ by splitting off directed paths of length two (and deleting isolated vertices). Equivalently, $F$ is immersed in $D$ if there exists an injective map $\varphi: V(F) \to V(D)$ and a collection of edge-disjoint directed paths in $D$, one from $\varphi(u)$ to $\varphi(v)$ for every edge $uv$ in $F$. In this setting, the vertices $\{\varphi(v): v\in V(F)\}$ are called \emph{terminals}. When the collection of directed paths for an immersion is internally disjoint from its set of terminals, the immersion is said to be \emph{strong}; otherwise it may be called \emph{weak}. We omit these descriptors in this paper, as we are only interested in the weaker notion.

Our main interest is in finding conditions on a digraph which imply the existence of certain immersions.  Although this appears to be a very natural problem it seems to have received rather little attention.  One exception to this is a result of Mader \cite{Ma2}, who proved that every digraph $D$ of minimum outdegree $t$ immerses the digraph consisting of two vertices $x,y$ and $t$ copies of the edge from $x$ to $y$ (or, equivalently, $D$ has $t$ edge-disjoint directed paths from $u$ to $v$ for some $u,v \in V(D)$).  In contrast to this, no outdegree assumption implies the existence of $\K_2^2$ -- the digraph consisting of two vertices $x,y$ and two copies of each non-loop edge $xy$ and $yx$; see Figure \ref{fig:K2^2}. This was first proved by Mader \cite{Ma3}, who utilized a family of digraphs constructed earlier by Thomassen \cite{Th} which have high outdegree but no cycle of even length.  We give a new proof of this result (highlighted below) which is based on a simpler construction.  In fact, our graphs also give an easy example of high outdegree digraphs with no even cycle, thus simplifying the former construction by Thomassen.  Since this latter property is not our focus, this argument is relegated to an appendix.

\begin{figure}[htb]
\begin{center}
  \def\vertexscale{1}
  \def\arrowscale{1.2}
  \def\arrowstyle{latex'} 
\begin{tikzpicture}[scale=1.3]
  \input{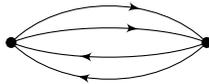}
\end{tikzpicture}
\caption{The digraph $\K_2^2$}
\label{fig:K2^2}
\end{center}
\end{figure}

\begin{theorem}
\label{thm:diego-digraph}
For every positive integer $k$ there exists a simple digraph $D$ with minimum in- and outdegree at least $k$ so that $D$ does not immerse $\K_2^2$.
\end{theorem}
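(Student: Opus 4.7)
The plan is to exhibit, for each positive integer $k$, a simple digraph $D_k$ with $\delta^+(D_k),\delta^-(D_k)\ge k$ that does not immerse $\K_2^2$. My argument has two components: a structural reduction, followed by an inductive construction.

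I would first reduce matters to a purely cycle-theoretic question. Suppose $D$ immerses $\K_2^2$ via terminals $u,v$ and edge-disjoint paths $P_1,P_2\colon u\to v$ and $Q_1,Q_2\colon v\to u$. Then $W_1=P_1Q_1$ and $W_2=P_2Q_2$ are two edge-disjoint closed trails based at $u$. A standard first-repetition argument shows that every nonempty closed trail based at a vertex contains a directed cycle through that vertex: traverse the trail until the first repeated vertex and close off a sub-cycle; if the sub-cycle does not pass through the base vertex, delete it from the trail and recurse on the shorter closed trail at the same base vertex. Hence every $\K_2^2$-immersion yields two edge-disjoint directed cycles through a common vertex, and it suffices to produce $D_k$ in which no vertex lies on two edge-disjoint directed cycles while $\delta^+, \delta^-\ge k$.

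I would build $D_k$ inductively. Let $D_1$ be a directed cycle of odd length. To construct $D_{k+1}$ from $D_k$, replace each vertex $v\in V(D_k)$ with a directed cycle $Z_v$ of (odd) length $n_v$ and, for every arc $vw\in E(D_k)$, install a collection of arcs from $V(Z_v)$ to $V(Z_w)$. The lengths $n_v$ and the arc distribution must be chosen so that (a)~every in- and out-degree increases by exactly one, (b)~the digraph remains simple, and (c)~every directed cycle of $D_{k+1}$ is either internal to a single $Z_v$ or projects, after collapsing each $Z_v$ to a point, to a directed cycle of $D_k$.

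Granting (a)--(c), the forbidden configuration is ruled out by cases. Two edge-disjoint directed cycles through a common vertex of $D_{k+1}$ cannot both be internal, since each $Z_v$ is itself a single cycle; nor can both be external, since projection would yield two edge-disjoint directed cycles through a common vertex of $D_k$, contradicting the inductive hypothesis; and in the mixed case, the external cycle must traverse $Z_v$ from its entry arc to its exit arc, thereby sharing an internal arc of $Z_v$ with the internal cycle. The principal obstacle lies in the concrete realization of (a)--(c): the inter-$Z_v$ arcs must add exactly one in- and one out-neighbour at every vertex, without creating parallel arcs and without producing spurious cycles that would spoil the projection property. As a side benefit, with odd $n_v$ and a consistent arc distribution, every directed cycle of $D_k$ inherits odd length, giving a simpler alternative to Thomassen's \emph{large outdegree, no even cycle} construction; this verification is deferred to the appendix.
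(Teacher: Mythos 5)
Your reduction at the start is correct and is a nice reformulation not made explicit in the paper: a $\K_2^2$-immersion does indeed produce two edge-disjoint closed trails based at the same terminal, and the first-repetition argument extracts from each a directed cycle through that terminal, so it suffices to build digraphs of high in- and out-degree in which no vertex lies on two edge-disjoint directed cycles. However, the blow-up construction you propose does not deliver this, and the gaps are not merely a matter of ``concrete realization'' of (a)--(c) as you suggest; the case analysis itself fails even if (a)--(c) are granted.

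First, the mixed case is wrong as stated. If $C$ is an external cycle meeting $Z_v$ in a single vertex $x$ (entering $x$ along an inter-cycle arc and leaving $x$ along another inter-cycle arc), then $C$ traverses no arc of $Z_v$, so $C$ and the internal cycle $Z_v$ are edge-disjoint cycles through $x$. Your claim that ``the external cycle must traverse $Z_v$ from its entry arc to its exit arc, thereby sharing an internal arc of $Z_v$'' is only valid when the entry and exit vertices differ; nothing forces that. Second, in the both-external case, edge-disjointness in $D_{k+1}$ does not transfer to the projections: each arc $uv$ of $D_k$ corresponds to many parallel inter-cycle arcs from $Z_u$ to $Z_v$, so two edge-disjoint external cycles can use distinct arcs from $Z_u$ to $Z_v$ and yet project to cycles that both use $uv$. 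The inductive hypothesis therefore does not apply. Third, property (c) itself is doubtful for any natural assignment of inter-cycle arcs: an external cycle may pass through several distinct vertices of the same $Z_v$ without using any arc of $Z_v$, in which case its projection is a closed walk in $D_k$ that repeats $v$ rather than a cycle. Fixing all three problems simultaneously, while keeping the digraph simple and raising every degree by exactly one, is exactly the hard part, and the sketch gives no mechanism for it. For comparison, the paper avoids these difficulties by a quite different construction: an arborescence augmented with all back-arcs to ancestors, for which every cut meeting the tree in a single edge has all other edges oriented oppositely, giving $\min\{\lambda(u,v),\lambda(v,u)\}\le 1$ for all pairs directly; the in-degree deficiency is then repaired by pasting this digraph to its reverse with all arcs between the two halves going one way, so any $\K_2^2$-immersion would be trapped in one half.
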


Mader also considered the problem of finding subdivisions of transitive tournaments.  He proved in \cite{Ma4} that every digraph of minimum outdegree 3 contains a subdivision of the transitive tournament on 4 vertices, and he conjectured in \cite{Ma3} that there exists a function $f$ so that every digraph of minimum outdegree $f(t)$ contains a subdivision of the transitive tournament on $t$ vertices.  Weakening the conclusion of this conjecture to allow for an immersion of the transitive tournament on $t$ vertices  yields the following interesting open problem.

\begin{conjecture}
There exists a function $f$ so that every simple digraph of minimum outdegree $f(t)$ contains an immersion of the transitive tournament on $t$ vertices.
\end{conjecture}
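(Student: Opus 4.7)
The natural line of attack is induction on $t$, the base case $t\le 2$ being trivial (any edge realises an immersion of the transitive tournament on two vertices). Suppose the conjecture holds for $t-1$ with a function $f(t-1)$, and take a simple digraph $D$ with $\delta^+(D)\ge f(t)$, where $f(t)$ is to be chosen. Pick any vertex $v\in V(D)$. Since removing $v$ drops the outdegree of every other vertex by at most one (namely, by one if and only if it had an edge to $v$), $D-v$ has minimum outdegree at least $f(t)-1$. Choose $f(t)\ge f(t-1)+1$, so that the inductive hypothesis applies to $D-v$ and produces an immersion of the transitive tournament $T_{t-1}$ with terminals $v_2,\ldots,v_t$ in transitive order.

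Designate $v=v_1$. To complete an immersion of $T_t$ it remains to route $t-1$ edge-disjoint directed paths from $v$ to $v_2,\ldots,v_t$ respectively, edge-disjoint also from the system of paths realising the $T_{t-1}$ immersion. \emph{This is the main obstacle.} The hypothesis controls only the \emph{total} outdegree at $v$, not how many of those out-edges are directed toward the specific target set $\{v_2,\ldots,v_t\}$, and the paths produced by induction may already occupy many of the edges we would need. Mader's theorem cited earlier, which guarantees $k$ edge-disjoint paths between \emph{some} ordered pair when $\delta^+\ge k$, does not let us prescribe either endpoint, so it cannot be applied directly here.

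Two avenues seem worth trying. First, one could reverse the order of construction: find a $T_{t-1}$ immersion first, then select the top vertex $v$ so as to guarantee many edge-disjoint directed paths from $v$ to $\{v_2,\ldots,v_t\}$ in the residual graph. By a counting argument, if $f(t)$ is chosen very large relative to $f(t-1)$, a positive fraction of vertices outside the immersion have many out-edges into its terminal set, and a Menger-style routing should succeed. Second, one could strengthen the inductive statement so that the $T_{t-1}$ immersion is produced inside a subgraph $D''\subseteq D-v$ whose complement still retains prescribed residual outdegree at every non-terminal vertex, so that the final routing from $v$ has unspent capacity to draw on. The fundamental difficulty, and probably the reason the conjecture remains open, is that minimum outdegree is a strictly local parameter, whereas an immersion of $T_t$ demands globally coordinated edge-disjoint paths into many prescribed targets; I would expect any eventual proof to require a genuinely new tool, intermediate between Mader's single-target path theorem and full edge-connectivity.
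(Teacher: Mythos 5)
This statement is labelled a \emph{conjecture} in the paper and is explicitly presented as an open problem; the authors do not prove it, so there is no proof of record for your attempt to be measured against. Your proposal is in fact an honest non-proof: you set up the obvious induction, correctly identify that $D-v$ has minimum outdegree at least $f(t)-1$, and then correctly point out that the inductive step breaks down because one cannot control how many out-edges of $v$ (or edge-disjoint paths from $v$) reach the $t-1$ prescribed terminals, and that Mader's path theorem does not allow one to prescribe endpoints. That is exactly the kind of obstruction that leaves this open. One small caveat on your first ``avenue'': even if $f(t)$ is huge relative to $f(t-1)$, the terminal set of the $T_{t-1}$ immersion has only $t-1$ vertices, so a counting argument over vertices' out-edges into that fixed small set cannot by itself guarantee a good choice of top vertex $v$; you would at minimum need control over paths, not just direct edges, and some global connectivity to route them. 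Your second avenue (a strengthened inductive statement reserving residual out-capacity) is closer in spirit to what might eventually work, but as you note it requires a genuinely new tool. In short: you have not proved the statement, but the statement is not provable from what is in the paper either — it is an open conjecture — and your diagnosis of the difficulty is sound.
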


In light of Theorem \ref{thm:diego-digraph} we cannot hope to find a subdivision or even an immersion of a complete digraph $\K_t$ for $t \ge 3$ without some assumption beyond a minimum degree condition.  One positive result of this type is a theorem of  K\"uhn, Osthus and Young \cite{KOY} which shows that every suitably dense digraph contains a subdivision of a large complete digraph.  Our main results show that a minimum degree condition together with the added assumption of Eulerian implies the existence of a $\K_t$ immersion. Given an Eulerian digraph, we will use the term \emph{degree} to refer to both the outdegree and the indegree of a vertex, and similarly for the terms \emph{minimum degree} and \emph{maximum degree}. 

\begin{theorem}
\label{thm:quadratic}
Every simple Eulerian digraph with minimum degree at least $t(t-1)$ contains an immersion of $\K_t$.
\end{theorem}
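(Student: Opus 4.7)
The plan is to induct on $|V(D)|$, with Mader's directed splitting-off theorem for Eulerian digraphs serving as the principal reduction tool. Since splitting off inevitably creates parallel arcs, I would first widen the statement to cover every loopless Eulerian multigraph with minimum degree at least $t(t-1)$, and prove this strengthened version.

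For the inductive step, with $|V(D)| > t$, pick any vertex $v$ and apply Mader's theorem to pair the in-arcs at $v$ with the out-arcs at $v$ in such a way that simultaneously splitting off each pair yields an Eulerian multigraph $D'$ on $|V(D)|-1$ vertices in which the local edge-connectivity $\lambda_{D'}(x,y)$ equals $\lambda_D(x,y)$ for every pair $x,y \in V(D) \setminus \{v\}$. Splitting off at $v$ does not alter the in- or out-degree of any other vertex, so $D'$ still satisfies the minimum-degree hypothesis. By induction, $D'$ admits an immersion of $\K_t$; replacing each split arc $uw$ by the length-two path $u \to v \to w$ from which it arose then produces an immersion of $\K_t$ in $D$.

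The main obstacle is the base case $|V(D)| = t$, where every vertex must serve as a terminal and one has to directly exhibit $t(t-1)$ pairwise edge-disjoint directed paths, one from $v_i$ to $v_j$ for each ordered pair of distinct vertices. Here I would use the Eulerian structure: decompose a suitable sub-multigraph of $D$ into closed trails and then re-cut these trails at terminal visits to form directed paths between terminals. The minimum-degree bound $t(t-1)$ is just large enough that, at every terminal, the in-degree and out-degree (each at least $t(t-1)$) exceed the $t-1$ out-demands and $t-1$ in-demands contributed by $\K_t$ itself, leaving enough transit capacity to also route the remaining paths through this vertex as an internal vertex. Establishing that this redistribution can be performed simultaneously at all $t$ terminals, so that the quadratic threshold $t(t-1)$ is exactly what is needed, is the delicate combinatorial heart of the argument.
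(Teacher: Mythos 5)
Your approach differs substantially from the paper's, and unfortunately contains a fatal gap. The paper does not induct on $|V(D)|$; instead it proves a structural result (Theorem~\ref{thm:edge-connected immersion}) via a clever submodularity argument that extracts a nested sequence of vertex sets $X_0 \supseteq \cdots \supseteq X_{r-1}$ achieving high local edge-connectivity, uses Mader's splitting theorem \emph{once} to shrink the graph to a strongly $r$-edge-connected digraph $F$ on more than $r$ vertices, and then finishes by applying Edmonds' Disjoint Arborescence Theorem (Corollary~\ref{cor:edmonds-cor}) to $F$. The simplicity hypothesis is essential in that argument: it forces $|V(F)| > r$, so that Edmonds' theorem can be applied with $t$ distinct roots.

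The gap in your proposal is that the strengthened multigraph statement you set out to prove is false, so the induction has nothing to land on. Take $t = 4$ and $V(D) = \{a,b,c,d\}$; put $11$ parallel copies of each of $a \to b$, $b \to a$, $c \to d$, $d \to c$, and a single copy of each of $a \to c$, $c \to a$, $b \to d$, $d \to b$. This is a loopless, strongly connected Eulerian multigraph in which every vertex has in-degree and out-degree exactly $12 = t(t-1)$. However, the cut around $\{a,b\}$ has $d^+(\{a,b\}) = 2$, while any $\K_4$-immersion requires four edge-disjoint paths $P_{ac}, P_{ad}, P_{bc}, P_{bd}$, each of which must use one of those two edges — impossible. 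So minimum degree $t(t-1)$ does not suffice once parallel edges are allowed, and the splitting-off induction (which inevitably produces multigraphs) cannot be carried out without a separate mechanism controlling edge-connectivity. This is precisely what the paper's submodularity argument in Theorem~\ref{thm:edge-connected immersion} supplies, and what is missing here. A secondary issue is your inductive step itself: Mader's directed splitting theorem needs the connectivity condition $(\star)$ at some prescribed level $k$, which minimum degree alone does not guarantee for the original simple digraph (e.g., two dense pieces joined by a single digon), so you cannot invoke it to preserve all local edge-connectivities as claimed.
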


The quadratic bound of Theorem \ref{thm:quadratic} can be strengthened for small values of $t$ as follows.

\begin{theorem}
\label{thm:K_t for t<=4}
For $t \le 4$, every simple Eulerian digraph of minimum degree at least $t-1$ contains an immersion of $\K_t$.
\end{theorem}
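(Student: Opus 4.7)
My plan is to dispose of the cases $t=1,2,3,4$ in turn, with increasing complexity. The case $t=1$ is immediate. For $t=2$ it suffices to observe that a simple Eulerian digraph of minimum degree at least $1$ contains a directed cycle $C$: any arc $uv\in C$ together with the $v\to u$ dipath along $C-uv$ give two edge-disjoint oppositely directed paths, realizing $\K_2$ as an immersion.

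For $t=3$ the ambient digraph $D$ is a simple Eulerian digraph of minimum degree at least $2$, and I would carry out a short local case analysis at a vertex $v$ of minimum degree (so $\deg^+(v)=\deg^-(v)=2$). If $v$ lies on two digons then $v$ and its two neighbors already carry two pairs of oppositely directed edges, and the remaining two arcs of $\K_3$ can be produced using that the deletion of any directed cycle from an Eulerian digraph leaves an Eulerian digraph, which therefore decomposes into further directed cycles that supply the missing return paths. If $v$ lies on at most one digon, I would apply the Lov\'asz--Mader splitting-off theorem for Eulerian digraphs at $v$, pairing the two arcs into $v$ with the two arcs out of $v$ to produce a smaller simple Eulerian digraph of minimum degree at least $2$ in which edge-connectivities between all other pairs of vertices are preserved; this then sets up a straightforward induction on $|V(D)|$.

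For $t=4$ with minimum degree at least $3$, the same philosophy applies, but the twelve edge-disjoint paths required by $\K_4$ must be coordinated simultaneously, which cannot be certified by pairwise cut conditions alone. I would again split off at vertices of degree exactly $3$ in order to shrink $D$ while preserving enough connectivity, and then try to locate four candidate terminals $\{a,b,c,d\}$ supporting a sufficiently dense substructure (for instance four vertices pairwise joined by digons in the reduced digraph) so that the $K_4$-multicommodity cut condition can be verified directly and the twelve paths produced by an Eulerian edge-disjoint paths argument.

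The main obstacle is precisely the $t=4$ case: the variety of local configurations that can persist after splitting-off is much richer than at $t=3$, and the multicommodity routing problem is genuinely more delicate, since the obstruction is no longer a single $\lambda(x,y)$. This is, I expect, also the reason the proof method does not extend beyond $t=4$: for $t\ge 5$ the hypothesis of minimum degree $t-1$ is too weak to force any sufficiently rigid local configuration, which forces the authors back onto the quadratic bound of Theorem~\ref{thm:quadratic} in that regime.
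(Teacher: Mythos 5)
Your plan is aimed in the right direction — both you and the paper reduce to a regular or near-regular digraph and try to split off paths through a low-degree vertex and induct — but as written the proposal has two genuine gaps, one technical and one of substance.

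The technical gap: splitting off a path of length two through a vertex does not preserve simplicity. If $v$ has inneighbour $u$ and outneighbour $w$ and the edge $uw$ is already present, the split creates a parallel class; if $u=w$, it creates a loop. Your $t=3$ argument ("produce a smaller simple Eulerian digraph of minimum degree at least $2$… sets up a straightforward induction") and your sketch for $t=4$ both ignore this. The paper addresses this head-on by not assuming simplicity in its inductive statement: instead it allows one exceptional vertex $v_0$ of small degree and at most one parallel class of edges, required to be incident with $v_0$. It is precisely this relaxed hypothesis that lets the induction close. Claim 1 in the paper (if $v_0$ has inneighbour $u$ and outneighbour $w\neq u$ then the edge $uw$ is present) is the reflection of this obstruction, and it becomes the engine of the rest of the argument: it forces dense local structure around $v_0$, which ultimately yields a $\K_t$ subgraph or a smaller counterexample. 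Your version, by trying to preserve simplicity, would simply get stuck at this step.

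The substantive gap: for $t=4$ you describe intentions ("I would again split off…", "try to locate four candidate terminals…", "the $K_4$-multicommodity cut condition can be verified directly") rather than an argument. In the paper the $t=4$ case is where essentially all the work is. After proving four claims about a minimal counterexample (no forced missing edge around $v_0$, no parallel class, $\deg(v_0)\ge 2$, $\deg(v_0)<t-1$), they conclude $t=4$ and $\deg(v_0)=2$, and then do a three-way case analysis on how $N^+(v_0)$ intersects $N^-(v_0)$, in each case constructing a strictly smaller digraph satisfying the strengthened hypothesis. None of this is reconstructible from "try to locate four terminals and verify a cut condition"; in fact there is no multicommodity/cut-condition verification anywhere in the paper's proof, so that part of your plan is not just incomplete but pointed in a different (and, at min degree $3$, unlikely to succeed) direction. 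A minor additional issue: in the $t=3$ "two digons" case you assert that deleting the two digons from $D$ leaves an Eulerian digraph in which the return paths between the two neighbours exist; Eulerian does not imply connected, so you would also need to argue that the two neighbours of $v$ remain in the same component after the deletion.
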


In light of Theorem~\ref{thm:quadratic}, we may define a function $f$ by the rule that $f(t)$ is the smallest integer so that every simple Eulerian digraph of minimum degree at least $f(t)$ contains an immersion of $\K_t$.
Our results (together with the trivial lower bound) show that $t-1 \le f(t) \le t(t-1)$ and $f(t) = t-1$ for $t \le 4$.

For ordinary (undirected) graphs, there is an analogous function $g$ defined by the rule that $g(t)$ is the smallest integer so that every graph of minimum degree at least $g(t)$ contains an immersion of $K_t$
(note that we have dropped the assumption of Eulerian here).  Our understanding of this function is considerably better.  In particular, a recent theorem established by the current authors
together with Fox and Dvo\v{r}\'{a}k  \cite{DDFMMS} shows that $g(t) \le 200t$ so $g(t) = \Theta(t)$.  Further, a theorem of Lescure and Meyniel \cite{LM} (see also DeVos et al. \cite{DKMO}) shows that $g(t) = t-1$ for $t \le 7$, while an example due to Seymour shows that $g(t) \ge t$ for $t \ge 10$ (see \cite{DKMO} or \cite{LM}).

Although high in and outdegree at every vertex does not imply the existence of an immersion of $\K_3$ in a general digraph, it is possible to force the existence of such an immersion under a connectivity assumption. We say that a digraph $D$ is \emph{strongly $k$-edge-connected\/} if $D - S$ is strongly connected for every $S \subseteq E(D)$ with $|S| < k$. For a digraph $D$ and a vertex $v \in V(D)$ we define an \emph{arborescence with root} $v$ to be a subdigraph $T$ of $D$ which is a spanning tree in the underlying graph of $D$ and has the property that all edges are directed ``away'' from $v$. Note that $v$ is the unique vertex of $T$ with indegree $0$, and that all other vertices of $T$ have indegree $1$.  Edmonds' Disjoint Arborescence Theorem (\cite{Ed1}, \cite{Ed2}; see \cite{Sch} for a proof) gives a necessary and sufficient condition for the existence of a collection of edge-disjoint arborescences with prescribed root vertices in a general digraph.

For a digraph $D$ and a set $X \subseteq V(D)$ we let $d^+(X)$ denote the number of edges with initial point in $X$ and terminal point in $V \setminus X$ and we set $d^-(X) = d^+(V \setminus X)$.

\begin{theorem}[Edmonds' Disjoint Arborescence Theorem \cite{Ed1}, \cite{Ed2}]
\label{thm:arborescence}
Let $D$ be a digraph, and let $v_1, \ldots, v_{\ell} \in V(D)$ (not necessarily distinct).  Then there exist edge-disjoint arborescences $T_1, \ldots, T_{\ell}$ so that $T_i$ has root $v_i$ if and only if every
$X \subset V(D)$ satisfies the following condition:
\[ d^+(X)  \ge | \{ i :  v_i \in X, 1 \le i \le \ell \} |. \]
\end{theorem}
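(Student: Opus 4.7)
My plan is to prove necessity by a direct path-counting argument and sufficiency by induction on $\ell$, extracting one arborescence at a time and invoking submodularity of the cut function $d^+$.

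\textbf{Necessity.} Suppose edge-disjoint arborescences $T_1,\ldots,T_\ell$ exist with $T_i$ rooted at $v_i$. Fix any $X\subsetneq V(D)$, let $I=\{i:v_i\in X\}$, and pick any $w\in V(D)\setminus X$. For each $i\in I$, the unique $v_i$-to-$w$ path in the spanning arborescence $T_i$ uses at least one edge directed from $X$ to $V(D)\setminus X$; these $|I|$ boundary edges are distinct because the arborescences are edge-disjoint, so $d^+(X)\ge |I|$.

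\textbf{Sufficiency.} Induct on $\ell$, with $\ell=0$ vacuous. For the inductive step the plan is to extract a single spanning arborescence $T_1$ rooted at $v_1$ such that the residual digraph $D':=D\setminus E(T_1)$ still satisfies the cut condition for the shorter list $v_2,\ldots,v_\ell$; the remaining arborescences then come from the inductive hypothesis applied to $D'$. Build $T_1$ one edge at a time: starting from $U=\{v_1\}$, at each stage pick an edge of the current residual digraph $D^\circ$ directed from $U$ to $V(D)\setminus U$ and add its head to $U$, while maintaining the invariant that $D^\circ$ satisfies the cut condition for $v_2,\ldots,v_\ell$. Because $v_1\in U$, the cut condition applied to $U$ in $D^\circ$ gives $d^+_{D^\circ}(U)\ge 1$, so at least one candidate edge is available; the real task is to choose one whose deletion preserves the invariant.

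\textbf{The main obstacle} is showing that a safe candidate always exists, which is handled by a standard uncrossing argument. Call $Z\subsetneq V(D)$ \emph{tight} in $D^\circ$ if $d^+_{D^\circ}(Z)=|\{i\ge 2:v_i\in Z\}|$; a candidate edge $e$ is safe precisely when $e$ leaves no tight set. Since $d^+$ is submodular while $|\{i\ge 2:v_i\in\cdot\}|$ is modular, a routine uncrossing shows that the family of tight sets is closed under union and intersection. If every candidate edge $e$ from $U$ to $V(D)\setminus U$ were unsafe, one takes the union $Y$ of the tight sets blocking the various candidates; $Y$ is itself tight, it meets $U$, and no candidate edge exits $U\cup Y$. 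Using that every edge of the partial $T_1$ has both endpoints in $U$, the resulting count on the cut $U\cup Y$ in $D^\circ$ reduces to a violation of the cut condition for $D$ applied to the set $U\cup Y$ (which contains $v_1$), a contradiction. Hence a safe candidate always exists, $T_1$ grows to a spanning arborescence, and the induction closes. The delicate point is the bookkeeping that translates the tightness of $Y$ in $D^\circ$ back into a violation for $D$ counting $v_1$, which is where the argument must be set up carefully.
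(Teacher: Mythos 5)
The paper does not prove this theorem itself; it cites Edmonds and points the reader to Schrijver's book, so there is no in-paper proof to compare against. The question is therefore only whether your argument is correct.

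Your necessity argument is fine. Your sufficiency argument has the right global shape (peel off one spanning arborescence at a time, grow it greedily, use submodularity/modularity to uncross blocking tight sets), but the crucial step -- showing a safe candidate edge always exists -- has a genuine gap, plus a smaller slip.

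The smaller slip first: you write that ``because $v_1\in U$, the cut condition applied to $U$ in $D^\circ$ gives $d^+_{D^\circ}(U)\ge 1$.'' But the invariant you maintain on $D^\circ$ is the cut condition for $v_2,\ldots,v_\ell$ only, which gives $d^+_{D^\circ}(U)\ge|\{i\ge 2:v_i\in U\}|$, and the right-hand side can be $0$. The correct justification is that $E(T_1)$ lies entirely inside $U$, so $\delta^+_{D^\circ}(U)=\delta^+_D(U)$, and then the \emph{original} cut condition in $D$ gives $d^+_D(U)\ge|\{i:v_i\in U\}|\ge 1$. You need this same observation $d^+_{D^\circ}(X)=d^+_D(X)$ for sets $X\supseteq U$ later as well, so it is worth stating cleanly.

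The real gap is in the uncrossing step. You take $Y$ to be the union of tight sets blocking the various candidate edges and assert that $Y$ is tight. Uncrossing shows that the union of two tight sets $Z_1,Z_2$ is tight \emph{only when} $Z_1\cup Z_2\ne V(D)$ (you must apply the cut inequality $d^+_{D^\circ}(Z_1\cup Z_2)\ge p(Z_1\cup Z_2)$, and for $Z_1\cup Z_2=V(D)$ this reads $0\ge\ell-1$, which is false for $\ell\ge 2$). Nothing in your setup prevents the union of all blocking sets from being $V(D)$: each $Z_e$ omits only the head of $e$, and different blockers may cover each other's omitted heads, so $\bigcup_e Z_e=V(D)$ is entirely possible. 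When that happens, $Y$ is not tight (indeed the cut condition does not even apply to $Y$), and no contradiction is reached; likewise $U\cup Y=V(D)$ and the cut condition cannot be invoked for it either. The standard repair is not to take the union but to take $Y$ \emph{maximal} among tight sets $Z$ with $Z\cup U\ne V(D)$ (this family is nonempty since any single blocker lies in it). Then for any residual edge $e=(u,w)$ leaving $U\cup Y$ with $u\in U\setminus Y$, the blocker $Z_e$ satisfies $w\notin Y\cup Z_e\cup U$, so $Y\cup Z_e$ is again in the family, is tight by uncrossing, and properly contains $Y$ -- contradicting maximality. Hence every edge leaving $U\cup Y$ also leaves $Y$, so $d^+_{D^\circ}(U\cup Y)\le d^+_{D^\circ}(Y)=p(Y)\le p(U\cup Y)$, while $d^+_{D^\circ}(U\cup Y)=d^+_D(U\cup Y)\ge p(U\cup Y)+1$ since $v_1\in U$. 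That is the contradiction you wanted; your version of $Y$ does not reliably deliver it.

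One further minor point: your parenthetical ``no candidate edge exits $U\cup Y$'' is not what you actually use (and is not in general true even for the correct $Y$). What you need, and what the maximality argument gives, is that every edge of $D^\circ$ leaving $U\cup Y$ has its tail in $Y$, so $\delta^+_{D^\circ}(U\cup Y)\subseteq\delta^+_{D^\circ}(Y)$.
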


This result has the following easy corollary.

\begin{corollary}
\label{cor:edmonds-cor}
If $D$ is a strongly $t(t-1)$-edge-connected digraph with $|V(D)| \ge t$ then $D$ contains
an immersion of $\K_t$.
\end{corollary}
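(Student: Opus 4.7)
The plan is to apply Edmonds' Disjoint Arborescence Theorem directly, using a carefully chosen list of roots, and then to read off the required paths from the resulting arborescences.

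First I would pick any $t$ distinct vertices $v_1, \ldots, v_t \in V(D)$, which is possible since $|V(D)| \geq t$. These will be the terminals of our immersion. I would then invoke Theorem~\ref{thm:arborescence} with the list of roots $\underbrace{v_1, \ldots, v_1}_{t-1}, \underbrace{v_2, \ldots, v_2}_{t-1}, \ldots, \underbrace{v_t, \ldots, v_t}_{t-1}$, i.e., each $v_i$ appearing $t-1$ times, for a total of $\ell = t(t-1)$ roots.

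To check Edmonds' hypothesis, let $X \subset V(D)$ be non-empty and proper. Since $D$ is strongly $t(t-1)$-edge-connected, removing all edges leaving $X$ must disconnect $D$, so $d^+(X) \geq t(t-1)$. On the other hand, at most $t$ of the terminals lie in $X$, so counting with multiplicity,
\[
|\{i : v_i \in X,\ 1 \le i \le \ell\}| \le (t-1)\cdot t = t(t-1) \le d^+(X).
\]
Thus Edmonds' theorem yields $t(t-1)$ pairwise edge-disjoint arborescences, which we can index as $T^{(i)}_1, \ldots, T^{(i)}_{t-1}$ (for $i=1,\ldots,t$), where each $T^{(i)}_k$ is rooted at $v_i$.

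To build the immersion, for each $i$ and each $j \neq i$ I would assign to the ordered pair $(v_i, v_j)$ the unique directed $v_i$--$v_j$ path inside some arborescence rooted at $v_i$, using a different one of $T^{(i)}_1, \ldots, T^{(i)}_{t-1}$ for each of the $t-1$ choices of $j$. Paths drawn from two different arborescences are edge-disjoint by construction, and the terminals $v_1,\ldots,v_t$ together with this collection of $t(t-1)$ edge-disjoint directed paths, one from $v_i$ to $v_j$ for every ordered pair, exactly realise an immersion of $\K_t$. There is no real obstacle here: the only thing to get right is the multiplicity in the list of roots, which is chosen so that the bound from strong edge-connectivity exactly matches the worst case on the right-hand side of Edmonds' condition.
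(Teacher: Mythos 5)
Your proof is correct and follows essentially the same route as the paper: pick $t$ distinct terminals, invoke Edmonds' Disjoint Arborescence Theorem with each terminal as a root of multiplicity $t-1$, and use the $t-1$ arborescences rooted at $v_i$ to extract edge-disjoint $v_i$-to-$v_j$ paths. You simply spell out the verification of Edmonds' cut condition, which the paper leaves implicit.
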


\begin{proof}
Choose distinct vertices $v_1, \ldots, v_t$ and apply Edmonds' Disjoint Arborescence Theorem to choose $t(t-1)$ edge-disjoint arborescences with exactly $t-1$ of them having root $v_i$ for
$1 \le i \le t$. For each $1 \le i \le t$ use the $t-1$ arborescences rooted at $v_i$ to find a set of
edge-disjoint paths from $v_i$ to each other vertex $v_j$.  All together, these paths give an immersion of $\K_t$.
\end{proof}

The following result shows that the connectivity condition of Corollary \ref{cor:edmonds-cor} is best possible up to a factor of 2.

\begin{theorem}
\label{thm:con-digraph}
For every positive integer $t \geq 3$ there exists a simple digraph which is strongly $\frac{1}{2}t(t-3)$-edge-connected and does not immerse $\K_t$.
\end{theorem}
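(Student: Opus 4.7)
My plan is to exhibit, for every $t \ge 3$, an explicit simple digraph $D_t$ which is strongly $\tfrac{1}{2}t(t-3)$-edge-connected but has no immersion of $\vec{K}_t$, and to verify the two required properties by a direct cut analysis.

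The natural starting construction is two-sided: let $A$ and $B$ be disjoint vertex sets with $|A|=|B|=t-1$, each inducing a copy of $\vec{K}_{t-1}$, and join them by a bipartite bridge consisting of $\tfrac{1}{2}t(t-3)$ edges from $A$ to $B$ and the same number from $B$ to $A$, arranged so that every vertex has in- and out-degree at least $\tfrac{1}{2}t(t-3)$. The absence of a $\vec{K}_t$-immersion is then an elementary cut-counting argument: since $|A|=|B|=t-1<t$, any placement of $t$ terminals must place $k$ on one side and $t-k$ on the other with $1\le k\le t-1$, so an immersion would force $d^+(A)\ge k(t-k)\ge t-1$; the bridge has only $\tfrac{1}{2}t(t-3)$ edges across it, which is strictly smaller than $t-1$ precisely when $t\le 4$. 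Strong edge-connectivity is verified by checking the $(A,B)$-cut (size $\tfrac{1}{2}t(t-3)$ by construction), singleton cuts $\{v\}$ (size $(t-2)+$ bridge-degree of $v$, large enough by the degree hypothesis on the bridge), and arbitrary ``mixed'' cuts that straddle both sides (a short case analysis combining the internal $\vec{K}_{t-1}$-contributions with the bridge).

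The main obstacle is that this plain construction works only for $t\le 4$: for $t\ge 5$, the vertex-degree condition forces the bridge to carry at least $t-1$ edges in each direction, so the $(A,B)$-cut alone no longer obstructs the extremal placements with $k=1$ or $k=t-1$ terminals on one side. In this regime the construction must be augmented---for instance, by taking several vertex-disjoint copies of $\vec{K}_{t-1}$ arranged cyclically (or by suitable blow-ups and additional internal vertices) with controlled bridges between consecutive copies---so that every possible distribution of $t$ terminals among the clusters violates at least one cluster cut, while simultaneously every cut of $D_t$ meets $\tfrac{1}{2}t(t-3)$. This balancing act---keeping the minimum cut at exactly $\tfrac{1}{2}t(t-3)$ while ensuring every placement of $t$ terminals overloads some cut---is the delicate core of the argument, and I expect the verification to require a careful case analysis over possible terminal distributions and cut shapes.
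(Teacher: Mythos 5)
Your base construction, two copies of $\vec{K}_{t-1}$ joined by a bridge of $\frac{1}{2}t(t-3)$ edges in each direction, and the cut-counting argument you give for it, only obstruct a $\vec{K}_t$-immersion when $t\le 4$. You notice this yourself: for $t\ge 5$ the degree requirement forces the bridge to carry at least $t-1$ edges each way, and the $(A,B)$-cut no longer rules out the placements with $1$ or $t-1$ terminals on one side. At that point the proposal becomes a wish rather than a proof --- ``several vertex-disjoint copies of $\vec{K}_{t-1}$ arranged cyclically with controlled bridges'' is not a construction, and it is far from obvious that any arrangement of this shape can keep every cut at exactly $\frac{1}{2}t(t-3)$ while blocking every choice of $t$ terminals. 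Since $t\ge 5$ is precisely the interesting range (the theorem is trivially true for $t=3$, and easy for $t=4$), the proof is missing its main content.

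The gap is not just one of effort; the mechanism you are reaching for is the wrong one. A $\vec{K}_t$-immersion need not place its paths so that a single global cut is overloaded, so pure cut-counting against a fixed partition will not scale. The paper instead builds its obstruction from a \emph{local} structural property. It takes the digraph $D_k$ (a rooted arborescence of height $k$ with back-edges to all ancestors) with $k=\frac{1}{2}t(t-3)$; the crucial feature is that for every pair $u,v$ in $D_k$, at least one of $\lambda(u,v),\lambda(v,u)$ is at most $1$, because the tree edge-cut separating $u$ from $v$ has all other edges oriented against the tree edge. It then glues $D_k$ to its reversal $D'_k$ by all edges from $D'_k$ to $D_k$ plus a perfect matching $M$ of size $k$ from the root's children in $D_k$ to the root's in-neighbours in $D'_k$; this makes the whole digraph strongly $k$-edge-connected. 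The non-immersion argument is then: the $k$ matching edges can each serve at most one of the $2\binom{t}{2}$ edges of $\vec{K}_t$, so $F-M$ would have to immerse $\vec{K}_t$ minus $k$ edges, which still retains at least $\binom{t}{2}-k = t$ digons among $t$ vertices, hence a cycle of digons; that cycle would force two-way local edge-connectivity $\ge 2$ between some pair of vertices inside $D_k$ or $D'_k$, contradicting the asymmetric-connectivity property. To repair your proposal you would need an obstruction of this flavour --- a global structural reason no cycle of digons can survive --- rather than a single bottleneck cut.
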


Continuing in the vein of our earlier analysis, we define a function $h$ by the rule that
$h(t)$ is the smallest integer so that every simple digraph of strong edge-connectivity at least
$h(t)$ immerses $\K_t$. Corollary \ref{cor:edmonds-cor} together with Theorem \ref{thm:con-digraph} then shows that $\frac{1}{2}t(t-3) + 1 \le h(t) \le t(t-1)$.

Note that the upper and lower bounds on $h(t)$ differ only by a factor of approximately 2, while our bounds on $f(t)$ differ by a factor of $t$ when $t\geq 5$. It would be of significant interest to obtain better bounds on this function $f$.  In Section 4 of this paper we establish the exact values of $f(t)$ when $t\leq 4$ (Theorem \ref{thm:K_t for t<=4}), and offer an example indicating some of the difficulty involved in extending our methods to determine $f(5)$. The proof of the upper bound on $f$, Theorem \ref{thm:quadratic}, is the subject of Section 3. This proof uses  Corollary \ref{cor:edmonds-cor} as well as a structural result showing that every simple Eulerian digraph of minimum degree $t(t-1)$ immerses a strongly $t(t-1)$-edge-connected digraph on at least $t$ vertices. In the following section we construct examples to prove the lower bound on $h(t)$ (Theorem \ref{thm:con-digraph}), as well as the fact that arbitrarily high in- and outdegrees do not necessarily imply even a $\K_3$ immersion (Theorem \ref{thm:diego-digraph}). As we will see, the former examples are actually extensions of the latter.

\section{Examples}
\label{sec:ex}

We construct, for every positive integer $k$, a simple digraph $D_k$ as follows.  Take a rooted (undirected) tree of height $k$ where the root vertex has degree $k$, and every vertex at distance $j\geq 1$ from the root has degree $k-j+1$. Orient the edges of this tree so that it is an arborescence from the root (so all edges are directed ``away'' from the root).   See Figure~\ref{fig:tree structure for K3-free example} for an example with $k=4$.  Additionally, for every vertex $x$ (at level $j\geq 1$), add an edge from $x$ to every vertex on the path from the root to $x$ (a total of $j-1$ edges).

\begin{figure}[htb]
\begin{center}
  \def\vertexscale{0.8}
  \def\arrowscale{1.2}
  \def\arrowstyle{latex'} 
  \def\treeheight{4}
\begin{tikzpicture}[scale=1.3]
  \input{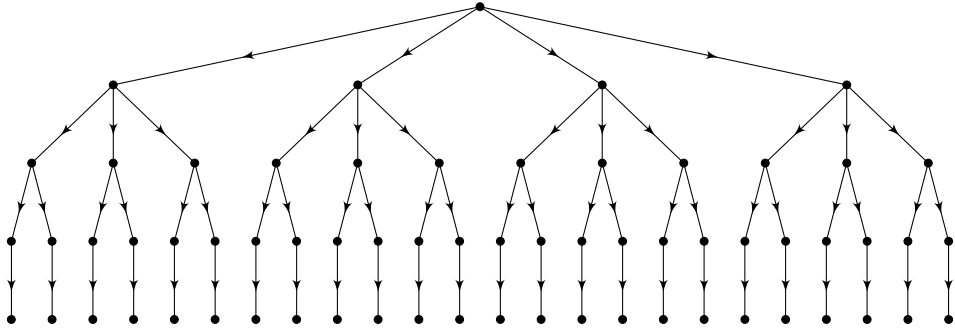}
  \begin{scope}
    \drawtree{\treeheight}{0}{0}{0.8}{10}
  \end{scope}
\end{tikzpicture}
\caption{Oriented rooted tree used to construct $D_\treeheight$}
\label{fig:tree structure for K3-free example}
\end{center}
\end{figure}

For two distinct vertices $x,y$ in a digraph $D$ we define $\lambda(x,y)$ to be the minimum of $d^+(X)$ over all subsets $X \subseteq V(D)$ with $x \in X$ and $y \not\in X$.  Menger's Theorem implies that $\lambda(x,y)$ is also equal to the maximum size of a collection of edge-disjoint directed paths from $x$ to $y$.

\begin{observation}
For every $k \ge 1$ the digraph $D_k$ has the following properties.
\begin{enumerate}
\item Every vertex in $D_k$ has outdegree $k$.
\item For every $u,v \in V(D_k)$ with $u \neq v$ we have $\min\{ \lambda(u,v), \lambda(v,u) \} \le 1$.
\item For every $v \in V(D_k)$ there exist $k$ edge-disjoint directed paths starting at $v$ with exactly one ending at each vertex at level 1 (i.e. the outneighbours of the root).
\end{enumerate}
\end{observation}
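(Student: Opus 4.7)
The plan is to verify the three properties by direct inspection of the tree structure underlying $D_k$.

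For item~(1), a straightforward tally of the out-edges at each vertex, splitting tree edges to children from back edges to ancestors, gives out-degree exactly $k$ in every case.

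For item~(2), the key structural observation is that for any non-root vertex $v$, the subtree $T_v$ rooted at $v$ has exactly one incoming edge in $D_k$, namely the tree edge from the parent of $v$. This is because tree edges point from parent to child, and every back edge points from a vertex to one of its proper ancestors, so no back edge can enter $V(T_v)$ from outside (a vertex outside $V(T_v)$ has no ancestors inside $V(T_v)$). Setting $X = V(D_k) \setminus V(T_v)$ then yields $d^+(X) = 1$, so $\lambda(u,v) \le 1$ whenever $u \notin V(T_v)$. For distinct vertices $u$ and $v$ we cannot have both $u \in V(T_v)$ and $v \in V(T_u)$, so at least one of $\lambda(u,v) \le 1$ or $\lambda(v,u) \le 1$ must hold.

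For item~(3), fix $v \in V(D_k)$ and let $c_1, \ldots, c_k$ denote the outneighbours of the root. If $v$ is the root, the $k$ tree edges do the job directly. Otherwise let $a_0 = \text{root}, a_1, \ldots, a_j = v$ be the root-to-$v$ path in the tree, and say $a_1 = c_{i^{\ast}}$. I would handle the target $c_{i^{\ast}}$ with the trivial path (if $v = a_1$) or with the back edge $v \to a_1$ (if $j \ge 2$), and then pair each of the remaining $k-1$ targets $c_i$ with a distinct unused out-edge $v \to w$ of $v$, routing along $v \to w \to \text{root} \to c_i$ (contracted to $v \to \text{root} \to c_i$ when $w$ is the root itself). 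Every such $w$ is either a proper ancestor of $v$ or a tree child of $v$, and in either case carries a direct back edge to the root, so the routes are valid. Edge-disjointness follows from three observations in sequence: the initial out-edges $v \to w$ are distinct by construction, the middle back edges $w \to \text{root}$ have pairwise distinct tails, and the final tree edges $\text{root} \to c_i$ are distinct across the $k-1$ chosen targets.

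The only delicate step is~(3); the essential ingredient is that every tree child of $v$ and every proper ancestor of $v$ possesses a back edge directly to the root, which is what allows each of the remaining out-edges of $v$ to be extended into a short edge-disjoint route to the root, and from there to a distinct level-$1$ vertex via the out-edges of the root.
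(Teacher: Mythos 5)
Your proof is correct and follows essentially the same approach as the paper's on all three items. For item~(2), both arguments identify the fundamental cut of a tree edge on the undirected $u$–$v$ path (equivalently, the boundary of a subtree $T_v$) and observe that all non-tree edges in that cut are back edges pointing the opposite way, so $d^+(X)=1$ for the appropriate side $X$; your formulation in terms of $V(T_v)$ is just a concrete way of stating the same cut. For item~(3), the paper routes all $k$ paths from $v$ to the root (via the direct back edge $v\to\mathrm{root}$ and the $k-1$ length-two paths $v\to w\to\mathrm{root}$) and then extends each along a distinct out-edge of the root, producing $k$ edge-disjoint walks; you do the same, except that you handle the one level-1 ancestor $c_{i^*}=a_1$ of $v$ separately (trivially or via $v\to a_1$) rather than through the root. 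This small refinement is actually a bit cleaner: it avoids the closed-walk issue the paper glosses over with ``walks, which implies the existence of the desired paths'' (e.g., when $v$ is itself at level 1, the naive routing through the root would produce a non-simple walk ending back at $v$), and yields genuine edge-disjoint directed paths directly.
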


\begin{proof}
Property 1 follows immediately from the definition.  For property 2, note that there is an edge-cut separating $u$ and $v$ which intersects the rooted tree in a single edge $e$.  Since every edge other than $e$ has the opposite orientation to $e$ in this cut, we must have $\min\{ \lambda(u,v), \lambda(v,u) \} \le 1$.  For property 3, note that every vertex other than the root has an edge to the root.  So, for a non-root vertex $v$, there are exactly $k$ directed paths of length $\le 2$ from $v$ to the root and these are all edge-disjoint.  Extending each of these along a different edge away from the root gives a collection of $k$ edge-disjoint walks, which implies the existence of the desired paths.
\end{proof}

\begin{proofof}{Theorem~\ref{thm:diego-digraph}}
Let $D'_k$ be a copy of $D_k$ with all edges reversed, and construct the digraph $D$ from the disjoint union of $D_k$ and $D_k'$ by adding all possible edges from $D'_k$ to $D_k$.  It follows from the first part of the observation that all vertices of $D$ have indegree and outdegree at least $k$.  If $D$ contains a $\K_2^2$-immersion, then such an immersion must also exist in $D_k$, but this would contradict the second part of the observation.
\end{proofof}

\begin{proofof}{Theorem~\ref{thm:con-digraph}}
Since the Theorem is trivially true for $t=3$, we may assume $t\geq 4$. We begin with the digraph $D$ constructed in the proof of the previous theorem for the parameter
$k = \frac{1}{2}t(t-3)$.  Let $X$ denote the outneighbours of the root vertex in $D_k$ and let $X'$ denote the inneighbours of the root vertex in $D'_k$. Form the digraph $F$ from $D$ by adding a perfect matching $M$ between $X$ and $X'$ so that all edges are oriented from $X$ to $X'$.
The proof of the theorem will follow from two separate claims.
\begin{claim*} $F$ is strongly $k$-edge-connected.
\end{claim*}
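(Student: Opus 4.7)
The plan is to verify directly that $d^+_F(S) \ge k$ for every non-empty proper subset $S \subsetneq V(F)$. Decompose $S = S_1 \cup S_2$ with $S_1 = S \cap V(D_k)$ and $S_2 = S \cap V(D'_k)$. The arcs of $F$ leaving $S$ partition into four classes: the internal $D_k$-cut of size $d^+_{D_k}(S_1)$, the internal $D'_k$-cut of size $d^+_{D'_k}(S_2)$, the bulk arcs from $S_2$ to $V(D_k)\setminus S_1$ (contributing $|S_2|\cdot|V(D_k)\setminus S_1|$), and the matching arcs of $M$ running from $X \cap S_1$ into $X' \setminus S_2$. The strategy is to lower-bound the first, second, and fourth of these using the Observation, and to fall back on the bulk arcs only in a few degenerate cases.

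The heart of the argument consists of two cut-plus-boundary inequalities derived from Observation~(3). Adjoin a new sink vertex $\infty$ to $D_k$ together with an arc $x \to \infty$ for each $x \in X$; the $k$ arc-disjoint $v$-to-$X$ paths supplied by part~(3) extend to $k$ arc-disjoint $v$-to-$\infty$ walks in this augmented digraph, so by Menger's theorem
\[
d^+_{D_k}(S_1) + |X \cap S_1| \ \ge\ k \qquad \text{for every } \emptyset \neq S_1 \subseteq V(D_k). \qquad (\ast)
\]
Since $D'_k$ is the arc-reverse of $D_k$, reversing the paths of part~(3) shows that for every $v \in V(D'_k)$ there are $k$ arc-disjoint paths in $D'_k$ ending at $v$, one starting at each vertex of $X'$; attaching a source vertex and repeating the Menger argument yields the dual inequality
\[
d^+_{D'_k}(S_2) + |X' \setminus S_2| \ \ge\ k \qquad \text{for every } S_2 \subsetneq V(D'_k). \qquad (\ast\ast)
\]
Since $M$ is a perfect matching between the $k$-element sets $X$ and $X'$, a pigeonhole count gives that the number of matching arcs from $X \cap S_1$ to $X' \setminus S_2$ is at least $|X \cap S_1| + |X' \setminus S_2| - k$.

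It remains to combine the estimates. If $S_1 = \emptyset$, then the bulk arcs from $S_2$ into $V(D_k) = V(D_k)\setminus S_1$ alone contribute at least $|V(D_k)| \ge k+1$; symmetrically, if $S_2 = V(D'_k)$ and $S_1 \subsetneq V(D_k)$, the bulk arcs into $V(D_k)\setminus S_1$ number at least $|V(D_k)| \ge k+1$. In the remaining case both $(\ast)$ and $(\ast\ast)$ apply; writing $u = |X \cap S_1| + |X' \setminus S_2|$ and summing them together with the matching estimate gives
\[
d^+_F(S) \ \ge\ (2k - u) + \max(0,\, u - k) \ \ge\ k,
\]
since both $u \le k$ and $u \ge k$ yield the bound $k$. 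I expect the only step requiring care to be the correct dualization leading to $(\ast\ast)$; once the roles of source and sink in the arc-reversed tree have been properly identified, the rest of the argument is mechanical.
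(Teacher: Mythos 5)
Your proof is correct, but it takes a genuinely different route from the paper. The paper proves strong $k$-edge-connectivity by directly exhibiting, for each ordered pair $u,v \in V(F)$, a system of $k$ edge-disjoint $u$-to-$v$ walks, splitting into four cases according to whether $u,v$ lie in $D_k$ or $D'_k$; in each case the walks are assembled from Observation~(3), the matching $M$, and the complete bipartite bundle of arcs from $D'_k$ to $D_k$. You instead verify the equivalent global cut condition $d^+_F(S)\ge k$ for every nonempty proper $S$, by first converting Observation~(3) (and its arc-reversal in $D'_k$) into the two Menger-type inequalities $(\ast)$ and $(\ast\ast)$ via auxiliary sink/source vertices, then adding a pigeonhole bound on the $M$-arcs crossing $S$, and finally handling separately the boundary cases $S\cap V(D_k)=\emptyset$ and $S\supseteq V(D'_k)$ using the bulk arcs alone. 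I checked the details and they hold: $(\ast)$ and $(\ast\ast)$ follow as you describe, the matching count $\ge |X\cap S_1|+|X'\setminus S_2|-k$ is correct, and the final algebra $d^+_F(S)\ge(2k-u)+\max(0,u-k)\ge k$ is sound; the two special cases are exactly the ones where one of the two inequalities is unavailable, and there the bulk arcs trivially supply the bound since $|V(D_k)|\ge k+1$. What the paper's approach buys is concreteness (it needs only the ``max-flow'' side of the Observation, never Menger's theorem or submodular bookkeeping); what your approach buys is a single uniform argument that replaces a four-way path construction with one cut estimate, at the modest cost of introducing auxiliary vertices and a small case split on degenerate $S$.
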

To check this, we let $u,v \in V(F)$, and we shall show that there exist $k$ edge-disjoint walks from $u$ to $v$.  If $u,v \in D_k$ then we may use part 3 of the observation to choose $k$ edge-disjoint paths in $D_k$ starting at $u$ and ending at $X$.  Now, extend each of these along an edge of $M$ to obtain edge-disjoint walks ending at $X'$ and then extend each of these walks with endpoint $x'$ along the edge $x'v$ to obtain $k$ edge-disjoint walks form $u$ to $v$.  A similar argument works for $u,v \in D'_k$.  If $u \in D_k$ and $v \in D'_k$ then (again using part 3 of the observation) we may choose $k$ edge-disjoint paths from $u$ to $X$ in $D_k$ and $k$ edge-disjoint paths from $X'$ to $v$ in $D'_k$.  By adding the edges in our matching $M$ to these, we obtain $k$ edge-disjoint paths from $u$ to $v$ as desired.  Finally, if $u \in D_k'$ and $v \in D_k$ then there exist $k$ edge-disjoint single edge paths from $u$ to $X$, these may be extended using the edges of $M$ to $k$ edge-disjoint paths from $u$ to $X'$ and now since every vertex in $X'$ has an edge to $v$, they can be extended to give the desired paths.

\begin{claim*} $F$ does not contain an immersion of $\K_t$.
\end{claim*}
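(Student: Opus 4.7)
The plan is to count uses of the matching $M$ and derive the absurdity $0 \ge 4$. Suppose, for contradiction, that $F$ immerses $\K_t$ with terminals $v_1,\ldots,v_t$, and let $a = |\{i : v_i \in V(D_k)\}|$ and $b = t-a$. Since $M$ is the unique set of edges from $V(D_k)$ to $V(D_k')$ in $F$, every one of the $ab$ walks of the immersion from a $V(D_k)$-terminal to a $V(D_k')$-terminal uses at least one edge of $M$. Denote by $y_1$ and $y_2$ the numbers of walks of the immersion whose endpoints both lie in $V(D_k)$, respectively both lie in $V(D_k')$, and which use at least one edge of $M$. Edge-disjointness of the walks together with $|M|=k$ then give the fundamental inequality
\[ ab + y_1 + y_2 \;\le\; k. \]

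The crux is to lower-bound $y_1$ and $y_2$ by a tree-bottleneck argument. In $D_k$, every non-root vertex $v$ satisfies $d^-_{D_k}(T_v)=1$, since the only edge entering the subtree $T_v$ is the tree edge $\mathrm{parent}(v)\to v$. Hence, among the walks of the immersion ending at a non-root $V(D_k)$-terminal $v$ whose starting $V(D_k)$-terminal lies outside $T_v$, at most one can remain in $D_k$; each of the others must exit $D_k$ through $M$ and re-enter $T_v$ via a backward edge from $V(D_k')$, thereby contributing to $y_1$. Letting $s_v^{(a)}$ denote the number of $V(D_k)$-terminals that are proper descendants of $v$,
\[ y_1 \;\ge\; \sum_{v} \bigl(a-2-s_v^{(a)}\bigr)^+ \;\ge\; \binom{a-1}{2}, \]
where the first sum is over non-root $V(D_k)$-terminals and the second inequality is an elementary combinatorial bound whose minimum over tree arrangements is attained by a chain configuration of the $V(D_k)$-terminals. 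The analogous argument applied to $D_k'$, in which each subtree $T_{v'}$ has a unique outgoing tree edge $v'\to\mathrm{parent}(v')$ in $D_k'$, yields $y_2 \ge \binom{b-1}{2}$.

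Substituting these bounds into the fundamental inequality and doubling,
\[ t(t-3) \;=\; 2k \;\ge\; 2ab+(a-1)(a-2)+(b-1)(b-2) \;=\; (a+b)^2-3(a+b)+4 \;=\; t^2-3t+4, \]
i.e.\ $0 \ge 4$, the desired contradiction. The step I expect to require the most care is the combinatorial inequality $\sum_v(a-2-s_v^{(a)})^+\ge \binom{a-1}{2}$: one must treat separately the case when some terminal has all other $V(D_k)$-terminals as proper descendants (where the $(\cdot)^+$ truncation contributes the crucial $+1$ making the chain case tight) and the case of several ``maximal'' terminals partitioning the others into disjoint subtrees (where the bound is slack by an additive $a-2$), and one must check that the reasoning is uniform in whether the root of $D_k$ is itself a terminal.
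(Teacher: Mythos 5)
Your proof is correct, and it takes a genuinely different route from the paper's. The paper's argument is indirect: since each of the $k$ matching edges can serve at most one walk of the immersion, $F-M=D$ must immerse a digraph $K'$ obtained from $\K_t$ by deleting at most $k$ edges. Since $K'$ still has at least $\binom{t}{2}-k=t$ digons on $t$ vertices, its ``digon graph'' contains a cycle, i.e.\ $K'$ contains a cycle of digons. Any two vertices $u,v$ of this digon cycle satisfy $\lambda_{D}(u,v)\ge 2$ and $\lambda_{D}(v,u)\ge 2$; but since $D$ has no edge from $D_k$ to $D'_k$, these vertices must all lie in the same half, and then this contradicts part~2 of the Observation (the one-edge tree-bottleneck property). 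Your argument instead counts directly how many walks of the immersion must cross $M$: the $ab$ forced cross-walks plus the tree-bottleneck contributions $y_1\ge\binom{a-1}{2}$ and $y_2\ge\binom{b-1}{2}$, which after the algebraic simplification yields $2k\ge t^2-3t+4$, a contradiction. The paper's route is shorter because it offloads the bottleneck reasoning onto the already-proved Observation~2 and a one-line ``too many digons means a digon cycle'' step, whereas yours is more elementary and self-contained but needs the auxiliary combinatorial inequality $\sum_v(a-2-s_v)^+\ge\binom{a-1}{2}$. That inequality does hold (with the chain tight, and the truncation supplying the missing $+1$ when the root is not a terminal), but it is an extra lemma the paper avoids having to prove. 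Both proofs are valid; yours is quantitatively sharper in spirit since it tracks where the matching budget actually goes, while the paper's is cleaner to write down.
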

Suppose (for a contradiction) that $F$ contains an immersion of $\K_t$.  Since each edge of $M$ can only help in forming a single edge in this immersed digraph, it follows that $F - M$ must immerse a digraph $K'$ obtained from $\K_t$ by deleting $k$ edges.  Since $K'$ still has at least
$\binom{t}{2} - k = \frac{1}{2}t(t-1) - \frac{1}{2}t(t-3) = t$ digons, it follows that $K'$ must contain a subdigraph which is isomorphic to the digraph obtained from a cycle (of length $\ge 3$) by replacing each edge by a digon.  If $u,v$ are vertices in this subdigraph, then we must have $\lambda_{F-M}(u,v) \ge 2$ and $\lambda_{F-M}(v,u) \ge 2$.  However, this contradicts part 2 of the observation.
\end{proofof}

\section{A quadratic bound}

The goal of this section is to prove Theorem~\ref{thm:quadratic}.  Our proof will use the corollary of Edmonds' Disjoint Arborescence Theorem stated in the introduction, and also the following classical result of Mader on splitting.

\begin{theorem}[Mader's Directed Splitting Theorem \cite{Ma}]
Let $D$ be a digraph with a distinguished vertex $v$.  Suppose that ${\mathit deg}^+(v) = {\mathit deg}^-(v)$ and the following is satisfied:
\[ (\star) \;  \mbox{Every nonempty $X \subset (V(D) \setminus \{v\})$ satisfies $d^+(X) \ge k$ and $d^-(X) \ge k$.} \]
Then for every edge $uv$ there exists an edge $vw$ so that the new digraph formed by splitting off the path $uvw$ still satisfies property $(\star)$.
\end{theorem}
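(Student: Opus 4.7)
The plan is to analyze the local cut-effect of the splitting, use submodular structure to localize the sets that could block a valid split, and then count edges at $v$ to find an admissible out-neighbor $w$. For the local effect, a direct case analysis on the positions of $u$ and $w$ in a subset $X \subseteq V(D) \setminus \{v\}$ shows that splitting off $uvw$ decreases both $d^+(X)$ and $d^-(X)$ by exactly one when $\{u,w\} \subseteq X$, and leaves both cuts unchanged in every other case. Hence $(\star)$ survives the split if and only if no \emph{tight} set --- that is, a set $X$ with $v \notin X$, $\{u,w\} \subseteq X$, and $\min\{d^+(X), d^-(X)\} = k$ --- exists. The task therefore reduces to choosing an out-neighbor $w$ of $v$ that lies in no tight set containing $u$.

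Next, fix the given $u$ and consider the families
\[
  \mathcal{F}^+ = \{X \subseteq V(D)\setminus\{v\} : u \in X,\ d^+(X) = k\}, \quad \mathcal{F}^- = \{X \subseteq V(D)\setminus\{v\} : u \in X,\ d^-(X) = k\}.
\]
By submodularity of $d^+$ (respectively $d^-$), and since $d^\pm \ge k$ holds for every nonempty subset of $V(D)\setminus\{v\}$ by $(\star)$, both families are closed under union and intersection (intersections are nonempty because every member contains $u$). Let $A$ and $B$ denote their maximum elements (empty if the family is). Every tight set containing $u$ is contained in $A$ or in $B$, hence in $A \cup B$, so any out-neighbor $w \notin A \cup B$ of $v$ produces an admissible split.

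For existence of such a $w$, suppose for contradiction that every out-edge of $v$ enters $A \cup B$. The plan is to derive a violation of $(\star)$ by edge counting: classify each edge of $D$ according to which of the regions $A \setminus B$, $A \cap B$, $B \setminus A$, $C := V(D) \setminus (A \cup B \cup \{v\})$, and $\{v\}$ contains its endpoints. The equalities $d^+(A) = d^-(B) = k$, the uniform bound $d^\pm \ge k$ on nonempty subsets of $V(D) \setminus \{v\}$, and the balance $\deg^+(v) = \deg^-(v)$, combined with the facts that the edge $uv$ accounts for one unit of $d^+(A)$ and that no out-edge of $v$ reaches $C$, force some cut --- either $d^-(C)$ (when $C$ is nonempty) or a mixed crossing cut involving $A \cap B$ (when $C = \emptyset$) --- to drop below $k$, contradicting $(\star)$.

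The main obstacle will be this final step: $d^+$-tight and $d^-$-tight sets do not merge under a single submodular inequality, so the interaction between $A$ and $B$ must be tracked by hand, with care taken around $A \cap B$ and $\{v\}$ where the balance condition on $v$ enters. Boundary cases --- one of $\mathcal{F}^+$, $\mathcal{F}^-$ empty, or $A$ and $B$ nested --- reduce to easier single-set contradictions using the complement $V(D) \setminus (A \cup \{v\})$ or $V(D) \setminus (B \cup \{v\})$, where the argument simplifies because only one submodular system is in play.
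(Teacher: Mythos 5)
The paper does not prove this statement; it cites Mader's theorem \cite{Ma} as a known black-box result, so there is no internal proof to compare against. Evaluating your attempt on its own merits: the local cut-effect analysis (split lowers both $d^+(X)$ and $d^-(X)$ by one exactly when $u,w\in X$, $v\notin X$, and otherwise leaves them fixed) is correct, and reducing the problem to finding an out-edge head $w\notin A\cup B$ for suitable ``maximal tight'' sets $A,B$ is the right idea. However two genuine gaps remain. First, $(\star)$ must be read as applying only to \emph{proper} nonempty subsets of $V(D)\setminus\{v\}$ --- otherwise the theorem is simply false (splitting always lowers $d^+(V(D)\setminus\{v\})=\deg^-(v)$ by one, so if equality $\deg^-(v)=k$ held before the split, $(\star)$ would necessarily fail after). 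But you assert $d^\pm\geq k$ ``for every nonempty subset of $V(D)\setminus\{v\}$'' and invoke this on $X\cup Y$ to get closure under union; when $X\cup Y=V(D)\setminus\{v\}$ this step is unjustified, so the families $\mathcal F^\pm$ need not be union-closed and the ``maximum element'' $A$ or $B$ may not exist. This must be repaired, for instance by working with the union of all proper tight sets and treating the degenerate outcome where that union is all of $V(D)\setminus\{v\}$ separately.

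Second, the closing contradiction is only sketched and is not a routine ``edge count'': you flag it yourself as ``the main obstacle'' to be ``tracked by hand.'' The ingredient you are missing is the exact cross-cut identity
\[
d^+(X)+d^-(Y)=d^+(X\setminus Y)+d^-(Y\setminus X)+e(X\cap Y,\,S)+e(S,\,X\cap Y),
\qquad S:=V(D)\setminus(X\cup Y)\ni v,
\]
valid for any $X,Y\subseteq V(D)\setminus\{v\}$. With $X=A$ ($d^+$-tight) and $Y=B$ ($d^-$-tight) crossing, $A\setminus B$ and $B\setminus A$ are nonempty proper subsets, so $(\star)$ gives $d^+(A\setminus B),d^-(B\setminus A)\geq k$, while $u\in A\cap B$ and the edge $uv$ force $e(A\cap B,S)\geq1$; the identity then yields $2k\geq 2k+1$, the contradiction. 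In the nested case (say $A\subseteq B$) a complementation argument on $\bar B=V(D)\setminus(B\cup\{v\})$, using $\deg^+(v)=\deg^-(v)$ and the presence of $uv$, gives $d^+(\bar B)\leq k-1<k$. Your case split (``$d^-(C)$ when $C$ nonempty'' vs.\ ``crossing cut when $C=\emptyset$'') does not line up with this: the crossing argument works independently of whether $C=S\setminus\{v\}$ is empty, while the $d^-(C)$ argument is really the nested-case complement argument. As written the proposal identifies the correct framework but leaves the decisive inequality unproved and misstates the hypothesis that makes the whole approach sound.
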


With this, we are ready to prove our main structural result from this section.   Extending our earlier notation, for a digraph $D$ and a set $X \subseteq V(D)$ we let $d(X) = d^+(X) + d^-(X)$.  Note that every Eulerian digraph satisfies $d^+(X) = d^-(X)$, so, in particular, $d(X)$ is always even.

\begin{theorem}
\label{thm:edge-connected immersion}
Every simple Eulerian digraph with minimum degree $r$ immerses a strongly $r$-edge-connected Eulerian digraph $F$ with $|V(F)| > r$ and the property that $F - S$ is simple for a set $S \subseteq E(F)$ with $|S| < r$
\end{theorem}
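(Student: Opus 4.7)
The plan is to prove the theorem by isolating a carefully chosen ``tight'' cut $X$ in $D$ and then using Mader's Directed Splitting Theorem to eliminate the complementary vertices.

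First, I would identify $X \subseteq V(D)$ with $|X| > r$ such that every nonempty proper subset $Z \subsetneq X$ satisfies $d^+_D(Z) \geq r$. Such an $X$ always exists: if $D$ is already strongly $r$-edge-connected, take $X = V(D)$, which has $|V(D)| \geq r+1$ since $D$ is simple with minimum degree $r$. Otherwise, the inequality $d^+_D(Z) \geq |Z| r - |Z|(|Z|-1) = |Z|(r - |Z| + 1) \geq r$, valid for every nonempty $Z$ with $1 \leq |Z| \leq r$ by simplicity and the degree assumption, shows that any cut of $D$ with $d^+ < r$ must have more than $r$ vertices. Choosing such a cut $X$ of minimum cardinality forces $d^+_D(Z) \geq r$ for every proper nonempty $Z \subsetneq X$: when $|Z| \leq r$ by the inequality, and when $|Z| > r$ by minimality of $|X|$.

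Next, I would construct $F$. If $D$ is strongly $r$-edge-connected, take $F = D$, which is simple, Eulerian, strongly $r$-edge-connected, and has $|V(D)| > r$, with $S = \emptyset$. Otherwise, form the auxiliary digraph $D^\dagger$ by contracting $V(D) \setminus X$ to a single vertex $v^*$ (discarding the resulting loops). Then $D^\dagger$ is Eulerian, $v^*$ has in- and out-degree exactly $d = d^+_D(X) < r$, and the tightness of $X$ implies $D^\dagger$ satisfies condition $(\star)$ of Mader's Directed Splitting Theorem at $v^*$ with parameter $k = r$. Iteratively applying Mader's theorem splits off all $d$ pairs of edges at $v^*$, and after deleting the now-isolated $v^*$, the resulting digraph $F$ on vertex set $X$ is Eulerian and strongly $r$-edge-connected. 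Taking $S$ to be the set of (at most $d$) edges produced by the splittings (which may be parallel to edges of $D[X]$ or may be loops), we obtain $F - S = D[X]$ simple, $|S| \leq d < r$, and $|V(F)| = |X| > r$.

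The main obstacle will be verifying that this ``combinatorial'' construction in $D^\dagger$ genuinely yields an immersion of $F$ in $D$. Each Mader split of the pair $(u v^*, v^* w)$ in $D^\dagger$ corresponds in $D$ to a specific pair of cross edges $(u \to y, y' \to w)$ with $y, y' \in V(D) \setminus X$, and the new edge $uw$ in $F$ should be realized by a walk $u \to y \to \cdots \to y' \to w$ in $D$, with the intermediate subwalks in $D[V(D) \setminus X]$ being edge-disjoint across all $d$ splittings. To establish this, I would exploit the Eulerian structure of $D$: each connected component $C$ of $D[V(D) \setminus X]$ has equal numbers of cross-in and cross-out edges with $X$, so $D[C]$ together with its cross edges (after identifying the $X$-endpoints with a virtual vertex) is itself an Eulerian digraph and therefore admits an Eulerian-circuit decomposition pairing cross-ins with cross-outs via internal walks. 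The splittings at $v^*$ should then be chosen compatibly with these decompositions within each component; verifying that Mader's theorem still permits such a choice---equivalently, that the set of component-respecting splittings at $v^*$ preserving $(\star)$ is nonempty---is the technical crux of the argument.
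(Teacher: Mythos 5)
Your construction of $X$ differs from the paper's, and this difference is exactly what causes the gap you identify. You take $X$ to be a minimum-cardinality set with $d^+_D(X) < r$. The paper instead builds a nested sequence $V(D) = X_{-1} \supseteq X_0 \supseteq \cdots \supseteq X_{r-1}$ where each $X_k$ is a \emph{minimal subset of} $X_{k-1}$ with $d(X_k) \le 2k$, then sets $X := X_{r-1}$. Both constructions yield the Mader-type condition on proper subsets of $X$, so your verification that $F$ is strongly $r$-edge-connected Eulerian, that $|V(F)| > r$, and that $F - S$ is simple, is sound. (Indeed, your bookkeeping for $|X| > r$ via $d^+(Z) \geq |Z|(r-|Z|+1)$ is a nice direct argument that parallels the paper's $n(n-1)+r > rn$ count on $F$.)

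The genuine gap is the one you flag: showing that $F$ is actually immersed in $D$. Contraction is not an immersion operation, and for your minimum-cardinality $X$ there is no reason the contraction $D^\dagger$ should be immersed in $D$. Consider $V(D)\setminus X = \{a, b\}$ with no internal edges, two cut edges $u_1 \to a$, $u_2\to b$ and two back $a\to u_1'$, $b\to u_2'$: to immerse $D^\dagger$ one would need a path from $u_2$ to a fixed image of $v^*$ (say $a$) avoiding $D[X]$, and none exists. Your repair idea, restricting Mader to ``component-respecting'' splits, is exactly the issue: Mader's theorem guarantees \emph{some} admissible split for each incoming edge, not the particular one dictated by the Eulerian pairing inside a component of $D[V\setminus X]$, and in the toy example above the two pairings are genuinely different and only one of them corresponds to edge-disjoint walks in $D$. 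You have not shown (and it is not at all clear) that the Eulerian-decomposition pairing preserves condition $(\star)$, which is the ``technical crux'' you yourself name but leave open.

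The paper circumvents this entirely with its nested choice. Letting $j$ be minimal with $X_j = X_{r-1}$, the crucial point is the existence of a vertex $y \in X_{j-1}\setminus X_j$ together with the claim $\lambda_D(x,y) \ge j$ for $x \in X_j$. Since $d(X_j) = 2j$ exactly, one can pick $j$ edge-disjoint $x$--$y$ paths; as each must cross the cut and there are only $j$ out-edges, every path crosses exactly once and collectively they use \emph{every} out-edge of the cut. Eulerian balance of $D$ minus those paths then produces $j$ edge-disjoint return $y$--$x$ paths using every in-edge of the cut. The relevant suffixes and prefixes of these $2j$ paths realize the contraction $D'$ as an immersion, \emph{before} Mader is invoked; splitting and deleting an isolated vertex then preserve ``immersed in $D$.'' This upstream immersion argument is what your proposal is missing and what the nested-minimal construction is designed to supply.
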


\begin{proof}
Let $D$ be a simple digraph with minimum degree $r$.  Set $X_{-1} = V(D)$ and choose a sequence of subsets $X_0, \ldots, X_{r-1}$
according to the rule that, for $0\leq k \leq r-1$:
\[ \mbox{$X_k$ is a minimal subset of $X_{k-1}$ with the property that $d(X_k) \le 2k$}. \]
Note that $d(X_{-1}) = 0$ and more generally $d(X_{k-1}) \leq 2k$ holds for all $k\geq0$ by induction, so the above choice is always possible. Note further that every $X_k$ with $k \ge 0$ induces a connected subdigraph.

\begin{claim*}
If $x,y \in X_k$ and $x \neq y$ then $\lambda_D(x,y) \ge k+1$.
\end{claim*}
To see this, suppose (for a contradiction) that it fails, and choose a set $Z \subseteq V(D)$ so that $|Z \cap \{x,y\}| = 1$ and $d^+(Z) \le k$ (note this last condition is equivalent to $d(Z) \le 2k$ since $D$ is Eulerian)
and subject to this the smallest index $j$ so that $Z \not\subseteq X_j$ is as large as possible.  We cannot have $Z \subseteq X_k$ as otherwise $Z$
would already contradict the choice of $X_k$, and thus $j$ exists.  Now, setting $\bar{Z} = V(D) \setminus Z$ and using the submodularity of the function $d$ gives us
\[ d(X_j \setminus Z) + d(Z \setminus X_j) = d(X_j \cap \bar{Z}) + d(X_j \cup \bar{Z}) \le d(X_j) + d(\bar{Z}) \leq 2j + 2k. \]
The set $Z \setminus X_j$ is a proper nonempty subset of $X_{j-1}$ so by assumption we have $d(Z \setminus X_j) \ge 2j$.  But then the above inequality
implies that the set $Z' = X_j \setminus Z$ satisfies $d(Z') \le 2k$ and then $Z'$ contradicts the choice of $Z$, thus completing the proof of the claim.

If $X_{r-1} = X_0$ then $D$ has a strongly $r$-edge-connected component, and we are already finished.  Otherwise, choose the smallest integer $j$
so that $X_{r-1} = X_j$ and note that $j \ge 1$ and $d(X_j) = 2j$ (were $d(X_j) < 2j$ we would have $X_j = X_{j-1}$).  Now, choose a vertex $x \in X_j$ and a vertex $y \in X_{j-1} \setminus X_j$.  It follows from the claim that $\lambda_D(x,y) \ge j$ so we may choose a collection $P_1, \ldots, P_j$ of edge-disjoint directed paths from $x$ to $y$.  It follows from the assumption that $D$ is Eulerian that the digraph obtained from $D$ by removing the edges of these paths has a collection $Q_1,\ldots,Q_j$ of edge-disjoint directed paths from $y$ to $x$.  Together, the existence of $P_1,\ldots,P_j$ and $Q_1,\ldots,Q_j$ implies that $D$ immerses the digraph $D'$ obtained from $D$ by identifying $V \setminus X_j$ to a single new vertex $w$ and deleting any resulting loops.
Since this identification can only increase the function $\lambda$ and since $X_j = X_{r-1}$ we then have $\lambda_{D'}(u,v) \ge \lambda_D(u,v) \ge r$ for every $u,v \in X_{r-1}$.  Now, we repeatedly apply Mader's Theorem to do splits at the vertex $w$ of $D'$ until $w$ becomes an isolated vertex, and we let $F$ be the component of the resulting digraph with vertex set $X_{r-1}$.  We now have $\lambda_{F}(u,v) \ge r$ for every $u,v \in X_{r-1}$ with $u \neq v$ so $F$ is strongly $r$-edge-connected.  Furthermore, setting $S$ to be the set of edges in $F$ formed by doing splits at $w$, we find that $|S| < r$ and $F - S$ is simple.  To complete the proof, set $n = |V(F)| \ge 2$ and note that $F - S$ has at most $n(n-1)$ edges which gives us
$n(n-1) + r >  |E(F)| = \sum_{v \in V(F)} {\mathit deg}^+_F(v) \ge rn$. This implies that $n > r$ as desired.

\end{proof}

\begin{proofof}{Theorem~\ref{thm:quadratic}}
We may assume that $t \ge 2$ as otherwise the result is trivial.  By Theorem~\ref{thm:edge-connected immersion} we may choose a digraph $D'$ immersed in $D$ which is strongly $t(t-1)$-edge-connected and has at least $t(t-1) \ge t$ vertices.  It now follows from Corollary \ref{cor:edmonds-cor} that $D'$ contains an immersion of $\K_t$ which completes the proof.
\end{proofof}

\section{Immersing small complete digraphs}

The following lemma shows that if we want to immerse $\K_t$ in an Eulerian digraph $D$, then we may as well assume that $D$ is regular.

\begin{lemma}\label{lemma:wma regular}
Let $t$ be a positive integer, and let $D$ be a simple Eulerian digraph with minimum degree at least $t$. Then $D$ contains an immersion of a simple $t$-regular Eulerian digraph.
\end{lemma}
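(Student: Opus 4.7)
The plan is to proceed by induction on $|E(D)|$. If $D$ is already $t$-regular, we are done. Otherwise, let $v$ be a vertex of maximum degree in $D$, and write $d := d^+(v) = d^-(v) > t$. Call a pair $(u,w) \in N^-(v) \times N^+(v)$ a \emph{good split pair} if $u \neq w$ and $uw \notin E(D)$.

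Suppose first that some good split pair exists. Splitting off the path $uvw$ (i.e.\ deleting the edges $uv$ and $vw$ and adding the edge $uw$) produces a digraph $D'$. The choice of $(u,w)$ ensures $D'$ is still simple; the only degrees that change are at $v$, where both $d^+(v)$ and $d^-(v)$ drop by $1$, so $D'$ remains Eulerian and still has minimum degree at least $t$ (the new degree of $v$ is $d-1 \geq t$, while all other degrees are unchanged). Since $|E(D')| < |E(D)|$, the inductive hypothesis yields a simple $t$-regular Eulerian digraph immersed in $D'$, and hence in $D$.

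The remaining case is that $v$ admits no good split pair; equivalently, $uw \in E(D)$ for every $u \in N^-(v)$ and $w \in N^+(v)$ with $u \neq w$. I claim that $D$ then contains $\K_{t+1}$ as a subdigraph, which is itself a simple $t$-regular Eulerian digraph and thus provides the required immersion. Fix $u \in N^-(v)$: then $u$ has the edge $uv$ together with an edge to every element of $N^+(v) \setminus \{u\}$, so $d^+(u) \geq 1 + |N^+(v) \setminus \{u\}|$. If $u \notin N^+(v)$, this gives $d^+(u) \geq d+1$, contradicting the maximality of $d^+(v) = d$. Hence $N^-(v) \subseteq N^+(v)$, and by the symmetric argument using in-degrees, $N^+(v) \subseteq N^-(v)$. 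Setting $N := N^-(v) = N^+(v)$, the no-good-split condition says that every ordered pair of distinct vertices of $N$ is joined by an edge of $D$, and since $D$ is simple this forces $D[N] = \K_d$. As $d \geq t+1$, any $t+1$ vertices of $N$ induce a copy of $\K_{t+1}$, as desired.

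The main obstacle is handling the no-good-split case. The crucial point is that choosing $v$ to have maximum degree upgrades a merely local obstruction (some pair of neighbours is blocked from splitting) into a strong global constraint, namely $N^-(v) = N^+(v)$ together with a complete digraph on this common neighbourhood, from which $\K_{t+1}$ falls out immediately. Once this structural fact is in hand, the rest of the argument is routine bookkeeping of degrees under a single splitting step.
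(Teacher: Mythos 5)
Your proof is correct and follows essentially the same route as the paper's: take a vertex $v$ of maximum degree, split off a non-edge pair $(u,w)$ from $N^-(v)\times N^+(v)$ if one exists and recurse, and otherwise use the degree-maximality of $v$ to force $N^-(v)=N^+(v)$ with a complete digraph on the neighbourhood, yielding $\K_{t+1}$ as a subgraph. The only cosmetic difference is that the paper recurses on a component of the resulting digraph, whereas you apply the inductive hypothesis to $D'$ directly; since the lemma imposes no connectivity hypothesis, both are fine.
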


\begin{proof}
  If $\Delta(D)=t$, then $D$ itself is $t$-regular and we are done. So we may assume $\Delta=\Delta(D)>t$.
  Let $v\in V(D)$ be a vertex with $\deg(v)=\Delta$. Since an inneighbour $u$ of $v$ has degree at most $\Delta$, there are at most $\Delta-1$ edges from $u$ to outneighbours of $v$.
  Hence, either $v$ has an inneighbour $u$ and an outneighbour $w\neq u$ with no edge from $u$ to $w$, or all inneighbours of $v$ are also outneighbours of $v$ and all possible edges between the neighbours of $v$ are present. In the first case we split off $uvw$ and use induction on a component of the resulting graph. In the second case we are done because $D$ contains $\K_{t+1}$ as a subgraph.
\end{proof}

We are now ready to show that both $\K_3$ and $\K_4$ immerse as we would like.

\vs

\begin{proofof}{Theorem~\ref{thm:K_t for t<=4}}
  For $t \leq 2$ the result is trivial, since an Eulerian digraph is strongly connected, and we only look for a cycle in $D$. Thus we only have to deal with $t=3$ and $t=4$. By Lemma~\ref{lemma:wma regular} it suffices to prove that any $(t-1)$-regular simple Eulerian digraph contains an immersion of $\K_t$. In fact, we will prove the following slightly stronger statement. Note that here, by \emph{parallel class of edges}, we mean a set of edges between two vertices that are oriented in the same direction.

  \noindent
  \emph{Let $t\in\{3,4\}$, let $D$ be an Eulerian digraph, and let $v_0\in V(D)$. Assume that $D$ satisfies the following conditions:
  \begin{enumerate}[label=$\bullet$, topsep=0pt, itemsep=0pt]
  \item
    $|V(D)| \geq 2$.
  \item
    $\deg(v_0) \leq t-1$ and $\deg(v)=t-1$ for all $v\in V(D)\sm\{v_0\}$.
  \item
    There is at most one parallel class of edges, and it is incident with $v_0$.
  \end{enumerate}
  Then $D$ contains an immersion of $\K_t$.
  }

  \medskip

  Assume (for a contradiction) that $D$ is a counterexample with $|V(D)|+|E(D)|$ minimum. We call $v_0$ the \emph{exceptional vertex}. We will prove four properties of $D$ before deducing a contradiction.

  \begin{claim}\label{claim:forced edges}
    If $v_0$ has an inneighbour $u$ and an outneighbour $w$ different from $u$, then there is an edge from $u$ to $w$.
  \end{claim}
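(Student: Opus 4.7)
The plan is to prove Claim~\ref{claim:forced edges} by appealing to the minimality of $D$. Suppose for contradiction that $u$ is an inneighbour of $v_0$ and $w\neq u$ is an outneighbour of $v_0$ with no edge from $u$ to $w$ in $D$. I would split off the directed path $uv_0w$, producing a digraph $D'$ obtained from $D$ by deleting $uv_0$ and $v_0w$ and adding the edge $uw$. Since $D$ is loopless, $u$, $v_0$, $w$ are pairwise distinct and no loop is created. The key point is that any immersion of $\K_t$ in $D'$ lifts back to one in $D$: simply replace the new edge $uw$, if it is used by some path in the immersion, by the path $uv_0w$ in $D$; the edges $uv_0$ and $v_0w$ are absent from $D'$ and so cannot have been used by any other path in the immersion, so edge-disjointness survives. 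Thus it suffices to exhibit a $\K_t$ immersion in $D'$.

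Next I would verify that $D'$ (after deleting $v_0$ if it has become isolated) satisfies the three hypotheses of the stronger statement, with the measure $|V|+|E|$ strictly smaller than that of $D$. The split preserves in/out-degree balance everywhere, so $D'$ is Eulerian. Only $v_0$'s degree changes, dropping by one to some value $\leq t-2$; the vertices $u$ and $w$ each trade one out-edge (respectively in-edge) for another at a different endpoint, so their degrees are unchanged, as are those of every other vertex. Crucially, the assumption that no $u\to w$ edge exists in $D$ guarantees the new edge $uw$ is a singleton, so no parallel class is created outside $v_0$; the parallel class at $v_0$, if any, can only shrink. The one corner case is $\deg_D(v_0)=1$, in which case $v_0$ becomes isolated in $D'$; deleting it is harmless because the claim hypothesis forces $u,v_0,w$ to be pairwise distinct, hence $|V|\geq 2$ is preserved, and any remaining vertex may play the role of the exceptional vertex since everything is now $(t-1)$-regular with no parallel classes. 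In all cases $|E|$ has dropped by one, so the measure strictly decreases.

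Minimality of $D$ as a counterexample now yields a $\K_t$ immersion in $D'$ (or in its reduction), which lifts to a $\K_t$ immersion in $D$, contradicting the choice of $D$. The main obstacle is really just the hypothesis bookkeeping: the split could a priori create a parallel class far away from $v_0$, and this is precisely where the ``no $uw$ edge'' hypothesis is used. Beyond that, the only other subtlety is handling the isolated-$v_0$ corner case, which evaporates thanks to $u\neq w$ ensuring that at least two vertices remain.
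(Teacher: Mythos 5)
Your approach is exactly the paper's: split off $uv_0w$ and appeal to minimality, using the absence of a $u\to w$ edge to keep the parallel-class count under control, and the degree accounting (only $v_0$ loses a unit, $u$ and $w$ are unchanged) is spot on. However, there is one small but genuine gap. You only address the possibility that $D'$ fails to be a legitimate instance of the inductive statement via the corner case $\deg_D(v_0)=1$, where $v_0$ becomes isolated. But even when $\deg_D(v_0)\geq 2$, removing the two edges $uv_0$ and $v_0w$ (and adding $uw$) can disconnect $D'$, with $v_0$ and its remaining neighbours in one component and $u,w$ in another. This is consistent with $D$ being Eulerian: the cut between the two sides of $D'$ would have consisted in $D$ of exactly one in-edge ($uv_0$) and one out-edge ($v_0w$), which is perfectly balanced. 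Since the paper's notion of Eulerian includes (strong) connectivity, a disconnected $D'$ is not a valid input to the inductive statement, and your appeal to minimality breaks down.

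The fix is exactly what the paper does in one clause: apply minimality to the component of $D'$ containing $u$ and $w$ (which has at least two vertices since $u\neq w$), taking $v_0$ as the exceptional vertex if it lies in that component and an arbitrary vertex otherwise. With that one addition your argument is complete and coincides with the paper's proof.
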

  If Claim~\ref{claim:forced edges} is false, then let $D'$ be the graph obtained from $D$ by splitting off $uv_0w$. Clearly the component of $D'$ containing $u$ and $w$ is a smaller counterexample, where the exceptional vertex is either $v_0$ (if $v_0$ is in that component) or else any vertex.

  \begin{claim}\label{claim:no parallel class}
    There is no parallel class.
  \end{claim}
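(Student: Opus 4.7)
The plan is to argue by contradiction using the minimality of $D$. Suppose $D$ has a parallel class; by reversing all arcs if necessary (the hypotheses and conclusion are symmetric), I may take it to consist of $k \geq 2$ edges from $v_0$ to some vertex $w$. Write $a = \deg(v_0)$, so $k \leq a \leq t - 1$; note that the $a$ inneighbours of $v_0$ are distinct, since the unique parallel class points out of $v_0$.

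The first step is a degree count at $w$. By Claim~\ref{claim:forced edges}, each inneighbour $u \neq w$ of $v_0$ contributes a forced edge $u \to w$, so together with the $k$ parallel edges we have $t - 1 = \deg^-(w) \geq k + (a - 1)$, giving $a + k \leq t$. Combined with $a \geq k \geq 2$, this is already a contradiction when $t = 3$. When $t = 4$ the inequality forces $a = k = 2$, and the slightly stronger bound $t - 1 \geq k + a$ that applies when $w$ is \emph{not} an inneighbour of $v_0$ fails; hence $w \to v_0 \in E(D)$, the other inneighbour $u_2 \neq w$ satisfies $u_2 \to w \in E(D)$, and the tight count $\deg^-(w) = 3$ rules out any loop at $w$ or any further inedge to $w$.

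The second step is a reduction. Split off both length-two paths through $v_0$: pair the inedge $w v_0$ with one of the two copies of $v_0 w$, creating a loop at $w$; and pair the inedge $u_2 v_0$ with the other copy of $v_0 w$, creating a second edge $u_2 \to w$ parallel to the existing one. Delete the now-isolated $v_0$ to obtain $D^\ast$. Every vertex of $D^\ast$ has degree $t - 1 = 3$; the new loop at $w$ is unique (since $D$ had none) and so is not itself a parallel class; the only parallel class of $D^\ast$ is the pair $u_2 \to w$; and $|V(D^\ast)| + |E(D^\ast)| < |V(D)| + |E(D)|$. Taking $v_0^\ast := w$ as the new exceptional vertex, $D^\ast$ satisfies every hypothesis of the stronger statement.

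By minimality, $D^\ast$ immerses $\K_4$. Lifting each split-off edge back to its $2$-path through $v_0$ in $D$ (using the two disjoint copies of $v_0 \to w$, one per shortcut) turns this into an immersion of $\K_4$ in $D$, contradicting that $D$ is a counterexample. The hard part is the degree bookkeeping: from the single inequality $a + k \leq t$ one must extract in the $t = 4$ case not only $a = k = 2$ but also the presence of the edge $w \to v_0$ and the absence of any loop at $w$ in $D$, because otherwise the reduction would create a second parallel class (either two doubled edges $u_i \to w$, or a parallel pair of loops at $w$) and violate the hypothesis for $v_0^\ast$.
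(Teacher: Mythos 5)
Your degree count at $w$ is correct, and modulo the symmetric reversal (you take the parallel class leaving $v_0$, the paper takes it entering $v_0$) your structural analysis matches the paper: $t = 4$, $\deg(v_0) = 2$, the parallel endpoint $w$ is also an inneighbour of $v_0$, and the second inneighbour $u_2$ has an edge to $w$. The intended reduction is also essentially the paper's.

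The flaw is in how you eliminate $v_0$. You propose to ``split off'' the walk $w v_0 w$ to create a loop at $w$, but this is not a legal split: by the paper's definition a split applies to a \emph{directed path} $xyz$ of length two, which requires $x \neq z$. More importantly, the resulting $D^{\ast}$ carries a loop, and the stronger inductive statement is tacitly about loopless digraphs --- for example, Claim~4's argument that a degree-$(t-1)$ vertex $v_0$ has ``$t-1$ distinct outneighbours'' would fail if $v_0$ carried a loop. So you cannot simply cite minimality for your $D^{\ast}$; you would also need to argue (separately) that the loop is never used in any immersion. The fix is what the paper actually does: split off only $u_2 v_0 w$, and then \emph{delete} $v_0$ together with the two remaining arcs $w v_0$ and $v_0 w$. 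This removes the problematic $2$-cycle as a subgraph deletion rather than a fake split, yields a loopless digraph in which $w$ has degree $2$ (so $w$ is the new exceptional vertex) and the unique parallel class is the pair $u_2 \to w$, and the lift of any $\K_4$-immersion back to $D$ is immediate. With that adjustment your argument coincides with the paper's.
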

  If there is a parallel class, it has to be incident with $v_0$. By symmetry, we may assume that $v_0$ has two (or three) parallel in-edges from a vertex $u$. Since $\deg(u) \leq t-1 \leq 3$, it follows from Claim~\ref{claim:forced edges} that $v_0$ has at most one outneighbour different from $u$. Given that there is only one parallel class, this means that $\deg(v_0)=2$ and hence $N^+(v_0) = \{u,w\}$ for some vertex $w \neq u$. Note that, by Claim~\ref{claim:forced edges}, there is an edge $uw$, thus $\deg(u)=3$ and we are in the case $t=4$. Now let $D'$ be obtained from $D$ by splitting off $u v_0 w$ and deleting $v_0$ afterwards. Clearly, any immersion in $D'$ extends to an immersion in $D$. In $D'$ all vertices except $u$ have degree three, and there is one parallel class between $u$ and $w$. Thus $D'$ (with new exceptional vertex $u$) is a smaller counterexample.

  \begin{claim}\label{claim:deg(v_0)>=2}
    $\deg(v_0) \geq 2$.
  \end{claim}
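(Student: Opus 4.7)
The plan is to assume $\deg(v_0) \leq 1$ and, in each case, construct a strictly smaller digraph satisfying the hypotheses of the strengthened statement, contradicting the minimality of $D$. Since $D$ is Eulerian we have $\deg^+(v_0) = \deg^-(v_0)$, so only $\deg(v_0) = 0$ and $\deg(v_0) = 1$ need to be considered.

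If $\deg(v_0) = 0$, then $v_0$ is isolated; I would pass to any connected component $C$ of $D - v_0$. Every vertex of $C$ has degree $t - 1 \geq 2$, which forces $|V(C)| \geq 2$, and $C$ inherits the no-parallel-class property from $D$. Taking any vertex of $C$ as the new exceptional vertex makes $C$ a smaller instance of the hypotheses, so by minimality it immerses $\K_t$, and the immersion sits inside $D$.

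For $\deg(v_0) = 1$, let $u$ be the unique inneighbour and $w$ the unique outneighbour of $v_0$. If $u = w$ there is a digon between $v_0$ and $u$, and I would delete $v_0$ and restrict to the component $C$ of $D - v_0$ containing $u$. In $C$, $u$ has degree $t - 2$ and every other vertex retains degree $t - 1$; since $t - 2 \geq 1$, $u$ keeps at least one inneighbour (and one outneighbour) other than $v_0$, so $|V(C)| \geq 2$, and $C$ is a smaller counterexample with $u$ as the new exceptional vertex. If instead $u \neq w$, Claim~\ref{claim:forced edges} supplies an edge from $u$ to $w$ already present in $D$, and I would form $D'$ by splitting off $uv_0w$ and then deleting the now-isolated $v_0$. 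The split creates a single parallel class consisting of the old and new edges from $u$ to $w$, preserves the degree of every remaining vertex, and strictly reduces $|V| + |E|$. Taking $u$ as the new exceptional vertex (which is incident with this parallel class), $D'$ satisfies all hypotheses, so by minimality it immerses $\K_t$; that immersion lifts to $D$ by replacing, if needed, one use of the parallel edge from $u$ to $w$ with the directed path $uv_0w$.

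The step I expect to require the most care is the $u = w$ subcase: a digon involving $v_0$ does not count as a parallel class, so one must carefully verify that deleting $v_0$ leaves a component of size at least two with all the structural hypotheses intact. This is exactly where the degree bound $t - 1 \geq 2$ is used, since it prevents $u$ from becoming isolated in $D - v_0$ and, combined with the degree requirement on non-exceptional vertices, rules out trivial small cases such as $|V(D)| = 2$.
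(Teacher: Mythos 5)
Your proof is correct and takes essentially the same route as the paper: rule out small degrees using the Eulerian assumption, then in the $\deg(v_0)=1$ case split on whether $u=w$ or $u\neq w$ and pass to a smaller counterexample (deleting $v_0$ in the first case, splitting off $uv_0w$ and reassigning the exceptional vertex in the second). You spell out more explicitly the parallel class created by the split, the need to restrict to a connected component, and the $\deg(v_0)=0$ possibility that the paper dismisses immediately, but these are elaborations of the same argument rather than a different approach.
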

  Since $|V(D)|\geq 2$ and $D$ is Eulerian we know that $\deg(v_0)>0$. If $\deg(v_0)=1$ with inneighbour $u$ and outneighbour $w$, then either $u=w$ (in which case $D-\{v_0\}$ with new exceptional vertex $u$ is a smaller counterexample), or $u \neq w$ (in which case splitting $v_0$ results in a smaller counterexample with new exceptional vertex $u$ or $w$). Note that in the former case, we maintain the required property of having at least two vertices since $u=w$ has degree $t-1 \geq 2$ in $D$.

  \begin{claim}\label{claim:deg(v_0)<t-1}
    $\deg(v_0) < t-1$.
  \end{claim}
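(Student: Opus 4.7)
The plan is to assume for contradiction that $\deg(v_0) = t-1$, so $D$ is $(t-1)$-regular, and then to exhibit a copy of $\K_t$ sitting inside $D$ as a subdigraph on $\{v_0\}\cup N^-(v_0)$. Since a subgraph is trivially an immersion, this contradicts the assumption that $D$ is a counterexample. The key leverage comes from Claims~\ref{claim:forced edges} and~\ref{claim:no parallel class}, which together impose very rigid constraints on the neighbourhoods of $v_0$.

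First I would name the neighbourhoods: write $N^-(v_0) = \{u_1,\dots,u_{t-1}\}$ and $N^+(v_0) = \{w_1,\dots,w_{t-1}\}$, each of size exactly $t-1$ since there are no parallel edges incident with $v_0$. Next I would count out-edges at a fixed $u_i$: the edge $u_iv_0$ is present by definition, and Claim~\ref{claim:forced edges} supplies an edge $u_iw_j$ for every $j$ with $w_j\neq u_i$. If $u_i$ were distinct from all the $w_j$'s, then $u_i$ would have $t$ distinct out-edges, violating the out-degree bound $t-1$. Hence every $u_i$ lies in $N^+(v_0)$, i.e.\ $N^-(v_0)\subseteq N^+(v_0)$, and the symmetric in-degree count at each $w_j$ gives $N^+(v_0)\subseteq N^-(v_0)$. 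Thus $N^-(v_0)=N^+(v_0)=:X$, and $v_0$ is joined to every vertex of $X$ by a digon.

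I would then apply Claim~\ref{claim:forced edges} internally to $X$: for any distinct $x,y\in X$, the vertex $x$ is an in-neighbour and $y$ is an out-neighbour of $v_0$, so $xy\in E(D)$; swapping their roles forces $yx\in E(D)$. Every pair in $X$ is therefore joined by a digon, and combined with the $t-1$ digons from $v_0$ to $X$, the subdigraph induced on $\{v_0\}\cup X$ is literally a copy of $\K_t$. This provides the desired $\K_t$-immersion and completes the contradiction.

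The only delicate point is the bookkeeping in the degree count: one must verify that the edge $u_iv_0$ is genuinely distinct from each of the edges $u_iw_j$, which is automatic because $v_0\notin N^+(v_0)$ (otherwise there would be a loop). The exceptional case $u_i=w_{j_0}$ is permitted but can occur for at most one index $j_0$ (since the $w_j$ are distinct), and it is precisely this single coincidence that saves $u_i$ from exceeding out-degree $t-1$. This tight accounting is what forces the neighbourhoods to coincide and ultimately delivers the full $\K_t$.
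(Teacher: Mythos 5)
Your argument is correct and follows the same route as the paper: assume $\deg(v_0)=t-1$, use Claim~\ref{claim:no parallel class} to get $t-1$ distinct in- and out-neighbours, use Claim~\ref{claim:forced edges} plus the degree bound to force $N^-(v_0)=N^+(v_0)$, and then apply Claim~\ref{claim:forced edges} once more to see that $\{v_0\}\cup N^-(v_0)$ induces a $\K_t$. You simply spell out the degree-counting step more explicitly than the paper does.
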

  If $\deg(v_0)=t-1$ then, by Claim~\ref{claim:no parallel class}, $v_0$ has $t-1$ distinct outneighbours. The inneighbours of $v_0$ have to be identical to the outneighbours, otherwise there would be a vertex of degree at least $t$ by Claim~\ref{claim:forced edges}. Hence, again by Claim~\ref{claim:forced edges}, $v_0$ and its $t-1$ neighbours form a $\K_t$, a contradiction.

  \medskip

  By Claims~\ref{claim:deg(v_0)>=2} and \ref{claim:deg(v_0)<t-1} imply that $t=4$ and $\deg(v_0)=2$. By Claim~\ref{claim:no parallel class}, $v_0$ has two distinct inneighbours, say $v_1$ and $v_2$. We distinguish three cases, based on where the two outneighbours of $v_0$ are.

  \begin{case}
    $N^+(v_0)=\{v_1,v_2\}$.
  \end{case}
  By Claim~\ref{claim:forced edges}, $v_1$ and $v_2$ are bidirectionally joined. For $i=1,2$, let $u_i$ be the remaining inneighbour of $v_i$, and let $w_i$ be the remaining outneighbour of $v_i$. Now let $D'$ be the digraph obtained from $D$ by contracting the vertices $v_0,v_1,v_2$ to a single vertex $v'_0$. It is easy to see that any immersion in $D'$ extends to an immersion in $D$. In $D'$ all vertices except $v'_0$ have degree three. If $u_1 \neq u_2$ or $w_1 \neq w_2$ then $D'$ contains at most one parallel class, thus $D'$ (with exceptional vertex $v'_0$) is a smaller counterexample. Hence we may assume that $u_1=u_2$ and $w_1=w_2$. If $u_1=w_1$ then $D'-\{v'_0\}$ is a smaller counterexample (with new exceptional vertex $u_1=w_1$). Otherwise let $D''$ be the digraph obtained from $D'$ by splitting off all edges incident to $v'_0$. Note that $D''$ is immersed inn $D$, so a $\K_4$-immersion in $D''$ does not exist. Since $D''$ is $3$-regular and contains exactly one parallel class from $u_1$ to $w_1$, it is a smaller counterexample.

  \begin{case}
    $|N^+(v_0)\cap\{v_1,v_2\}|=1$, say $v_2\in N^+(v_0)$.
  \end{case}
  Let $v_3$ be the remaining outneighbour of $v_0$. By Claim~\ref{claim:forced edges}, we have edges from $v_1$ to $v_2$ and to $v_3$, and from $v_2$ to $v_3$. Hence, $v_2$ has one additional inneighbour $u\notin\{v_0,v_1\}$ and one additional outneighbour $w\notin\{v_0,v_3\}$. First consider the case that $u=v_3$ and $w=v_1$. Then the coboundary of $V_0=\{v_0,v_1,v_2,v_3\}$ consists of exactly two in-edges at $v_1$ and two out-edges at $v_3$. Now let $D'$ be the graph obtained from $D$ by contracting $V_0$ to a single vertex $v'_0$. Since $D[V_0]$ contains two edge-disjoint paths from $v_1$ to $v_3$, any immersion in $D'$ extends to an immersion in $D$. All vertices except $v'_0$ have degree three in $D'$, and there is no parallel class in $D'$. Thus $D'$ is a smaller counterexample. Hence we have $u\neq v_3$ or $w\neq v_1$. By symmetry, we may assume $u\neq v_3$. Now let $D'$ be the graph obtained from $D$ by splitting off $uv_2v_3$, $v_0v_2w$, and $v_1v_2v_0$, and removing $v_2$. Since $u\notin\{v_0,v_1,v_3\}$ and $w\neq v_3$, this split creates no loops and gives only one parallel class between $v_1$ and $v_0$. Since all degrees (except for $v_2$) stay the same, $D'$ (with exceptional vertex $v_0$) is a smaller counterexample.

  \begin{case}
    $N^+(v_0)\cap N^-(v_0)=\emptyset$.
  \end{case}
  Let $N^+(v_0)=\{v_3,v_4\}$. By Claim~\ref{claim:forced edges}, we have all edges from $v_1$ and $v_2$ to $v_3$ and $v_4$. Let $u_1,u_2,u_3$ be the three inneighbours of $v_1$. We may assume that one of them, say $u_3$, is not $v_4$, and one of the other two, say $u_2$ is not $v_3$. Also note that $v_2\notin\{u_1,u_2,u_3\}$, because we know all three outneighbours of $v_2$, and none of them is $v_1$. Now let $D'$ be the graph obtained from $D$ by splitting off $u_1v_1v_0$, $u_2v_1v_3$, and $u_3v_1v_4$. Since $u_1\notin\{v_0,v_2\}$, $u_2\notin\{v_0,v_2,v_3\}$ and $u_3\notin\{v_0,v_2,v_4\}$, no loops or parallel edges are created. Hence, $D'$ is a smaller counterexample.
\end{proofof}

One would hope that proof of Theorem~\ref{thm:K_t for t<=4} could be generalized to larger $\K_t$, in particular to $\K_5$. Consider the following observation about this immersion problem:  If every simple Eulerian digraph $D$ with minimum degree $d$ immerses $\K_t$, then the same conclusion must still hold under the weaker assumption
that $D$ has all but one vertex of degree at least $d$.  To see this, note that were $D$ to be a counterexample to this stronger property with the single vertex $v$ of degree less than $d$,
then taking $d$ disjoint copies of $D$ and identifying all copies of the vertex $v$ yields a digraph of minimum degree $d$ which still does not immerse $\K_t$.  In light of this, it is natural
to permit one exceptional vertex of low degree. However, the digraph below indicates some of the difficulty in using the approach of Theorem~\ref{thm:K_t for t<=4} for $\K_5$.

\begin{figure}[htb]
\begin{center}
  \def\vertexscale{1}
  \def\arrowscale{1.2}
  \def\arrowstyle{latex'} 
\begin{tikzpicture}[scale=1.3]
  \input{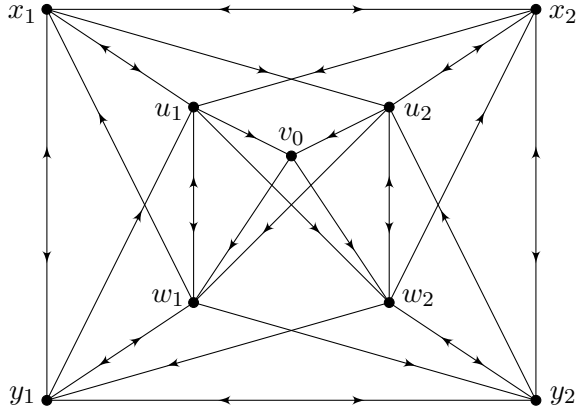}
\end{tikzpicture}
\caption{A difficult configuration for trying to immerse $\K_5$}
\label{fig:k5problems}
\end{center}
\end{figure}

In the digraph of Figure \ref{fig:k5problems}, all vertices have degree at least $4$ except for the vertex $v_0$, but however we choose a vertex and split off all of its edges, we move to a new digraph which has
at least two nontrivial disjoint parallel classes. Hence the structured assumption used to prove Theorem~\ref{thm:K_t for t<=4} does not extend easily for immersing larger complete digraphs.

\section*{Appendix}

Here we give a simplified proof of the following result:

\begin{theorem}\label{thm:Thom} \emph{(Thomassen \cite{Th})} For every $k$, there exists a digraph with minimum outdegree at least $k$ in which each cycle is of odd length.
\end{theorem}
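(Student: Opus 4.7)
The plan is to give an explicit construction. For each positive integer $k$, let $T$ be the rooted complete $k$-ary tree of height $2k$ (so every internal vertex has exactly $k$ children and every leaf lies at level $2k$), orient each edge of $T$ from parent to child, and then, for every leaf $\ell$ of $T$ and every ancestor $a$ of $\ell$ whose level belongs to $\{0,2,4,\ldots,2k-2\}$, add a back-arc from $\ell$ to $a$. Call the resulting digraph $G_k$.

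The degree check is immediate: each internal vertex has exactly $k$ tree out-arcs to its children, and each leaf has exactly $k$ back out-arcs, one to each of its $k$ even-level ancestors. Hence the minimum outdegree of $G_k$ equals $k$.

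The substantive step is to prove that every directed cycle in $G_k$ has odd length. The key structural observation is that the only arcs of $G_k$ that \emph{decrease} tree-level are the back-arcs, each of which emanates from a leaf and terminates at a vertex of even level at most $2k-2$. Let $C$ be a directed cycle and let $v^*$ be a vertex of $C$ of minimum tree-level. The arc of $C$ entering $v^*$ must decrease level, so it is a back-arc; consequently $v^*$ lies at some even level $2i$ with $i\in\{0,1,\ldots,k-1\}$, and in particular $v^*$ is not a leaf. Every out-arc of $v^*$ is therefore a tree arc, and so $C$ descends from $v^*$ along tree arcs, necessarily reaching a leaf $\ell'$ at level $2k$ before any further back-arc can be used. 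The next arc of $C$ is then a back-arc from $\ell'$ to some even-level ancestor $a$ of $\ell'$. Both $a$ and $v^*$ lie on the tree path from the root to $\ell'$, so one is an ancestor of the other; $a$ cannot be a strict ancestor of $v^*$ by the minimality of level, and it cannot be a strict descendant of $v^*$ either, since any such vertex was already visited on the tree path from $v^*$ down to $\ell'$. Hence $a=v^*$, the cycle $C$ uses exactly one back-arc, and its length is $(2k-2i)+1$, which is odd.

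The only genuine obstacle in this approach is ruling out cycles that use two or more back-arcs; the minimum-level analysis above accomplishes this in one stroke, after which the parity count is automatic and Theorem~\ref{thm:Thom} follows.
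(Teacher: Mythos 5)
Your construction is the same as the paper's (the paper uses an arbitrary arborescence of height $2k$ with internal outdegree $k$; you specialize to the complete $k$-ary tree, which is harmless), and your degree count and final parity computation agree exactly. Where you diverge is in the proof of the central claim that every directed cycle uses exactly one back-arc. The paper isolates this as a standalone lemma: if $F$ is an arborescence of $D$ and every non-tree edge $xy$ has a directed $F$-path from $y$ back to $x$, then every cycle of $D$ contains exactly one non-tree edge; the proof goes via fundamental cuts of $F$ (each tree edge on the $y$-to-$x$ path lies in a cocycle whose other members all point the wrong way, forcing that tree edge into the cycle). You instead argue directly in the concrete digraph by picking a vertex $v^*$ of minimum tree-level on the cycle: the arc into $v^*$ must be a back-arc, hence $v^*$ sits at an even level and is internal, the cycle then descends the unique tree path from $v^*$ to a leaf $\ell'$, and the ensuing back-arc must return to an ancestor of $\ell'$ that is neither a strict ancestor nor a strict descendant of $v^*$, hence is $v^*$ itself. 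Both arguments are correct. The paper's lemma is more abstract and reusable (it isolates exactly the structural hypothesis needed and does not reference levels or parity), while your minimum-level argument is more elementary and self-contained, trading generality for concreteness; it is a perfectly good alternative route to the same intermediate claim.
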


Our graphs used to proved Theorem \ref{thm:Thom} are quite closely related to those constructed in Section \ref{sec:ex}, and the following  lemma contains the key property we require.

\begin{lemma}
Let $D$ be a digraph, let $F$ be an arborescence of $D$ and assume that for every edge $xy \in E(D) \setminus E(F)$ there exists a directed path in $F$ from $y$ to $x$.  Then every directed cycle in $D$ contains exactly one edge in $E(D) \setminus E(F)$.
\end{lemma}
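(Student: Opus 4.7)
The plan is to split the claim into two parts: every directed cycle of $D$ uses at least one non-tree edge, and every directed cycle of $D$ uses at most one. The first is immediate, since $F$ is an arborescence and therefore acyclic.

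For the upper bound, I would argue by contradiction. Suppose some directed cycle $C$ of $D$ contains at least two non-tree edges. List the non-tree edges of $C$ in cyclic order as $e_1,\dots,e_m$ with $e_j=x_jy_j$ and $m\ge 2$, so that consecutive pairs are joined in $C$ by a subpath whose edges all lie in $F$. Writing $\preceq$ for the ancestor relation in the arborescence $F$, the segment of $C$ from $y_j$ to $x_{j+1}$ is a directed $F$-path, and hence $y_j\preceq x_{j+1}$; the assumption on the non-tree edge $e_{j+1}$ gives $y_{j+1}\preceq x_{j+1}$ as well. Both $y_j$ and $y_{j+1}$ are therefore ancestors of $x_{j+1}$ in $F$, and since the ancestors of any vertex in an arborescence form a chain, $y_j$ and $y_{j+1}$ are comparable. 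Moreover, the heads $y_1,\dots,y_m$ are pairwise distinct, since $C$ is a cycle and every vertex of $C$ has in-degree exactly one in $C$.

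The key step is that we cannot have $y_{j+1}\prec y_j$ for every index $j$ (mod $m$), because this would yield $y_1\succ y_2\succ\cdots\succ y_m\succ y_1$, a cycle in a partial order. So some index $j$ satisfies $y_j\prec y_{j+1}$. For this $j$, the unique directed path in $F$ from $y_j$ to $x_{j+1}$ must pass through every intermediate ancestor of $x_{j+1}$, and in particular through $y_{j+1}$. This tree path is precisely the segment of $C$ between $e_j$ and $e_{j+1}$, so $y_{j+1}$ appears as an internal vertex of that segment. But $y_{j+1}$ is also the head of $e_{j+1}$, which is not on that segment, so $y_{j+1}$ is visited twice by $C$---contradicting $C$ being a cycle.

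The hard part, such as it is, will just be bookkeeping of the degenerate cases: confirming that ``cyclically consecutive non-tree edges'' are well-defined when $m\ge 2$, excluding loops (a non-tree edge $xx$ would force $C$ to be that loop alone, making $m=1$), and checking that coincidences like $y_j=y_{j+1}$ or $y_j=x_{j+1}$ do not slip past the distinctness argument or the case $y_j\prec y_{j+1}$. None of these corner cases obstruct the main line, which is the ancestor-chain pigeonhole above.
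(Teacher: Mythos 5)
Your proof is correct, but it takes a genuinely different route from the paper. The paper picks a single non-tree edge $xy$ on $C$ and argues via \emph{fundamental cuts}: for each tree edge $e$ on the $F$-path $P$ from $y$ to $x$, the fundamental cut of $e$ separates $x$ from $y$, and the hypothesis (every non-tree edge runs from descendant to ancestor) forces all non-tree edges crossing that cut to point in the direction opposite to $e$; so $C$, containing $xy$, can only re-cross the cut via $e$, whence $P \cup \{xy\} \subseteq E(C)$ and thus $C = P \cup \{xy\}$. Your argument is instead order-theoretic: consecutive non-tree heads $y_j,y_{j+1}$ are both ancestors of $x_{j+1}$ and hence comparable, the relation cannot strictly decrease all the way around the cycle, and for any index with $y_j \prec y_{j+1}$ the vertex $y_{j+1}$ is forced to lie strictly inside the tree segment from $y_j$ to $x_{j+1}$, giving a repeated visit. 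Both are sound. The paper's version is shorter, avoids any case analysis on loops or coincidences, and pins down the exact form of $C$ in one stroke; yours is somewhat more elementary in that it uses only the ancestor partial order and the uniqueness of tree paths, but it does require the bookkeeping you flag (distinctness of the $y_i$'s, excluding loops, the strictness $y_j \prec y_{j+1} \prec x_{j+1}$) to make the "two visits" contradiction airtight. Those details do check out, so nothing is missing.
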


\begin{proof}
Let $C$ be a directed cycle in $D$ and choose an edge $xy \in E(C) \setminus E(F)$.  Set $P$ to be the directed path from $y$ to $x$ in $F$.  Now, for every edge $e$ in $P$, the fundamental cut of $F$ with respect to this edge separates $x$ and $y$ and has the property that all edges other than $e$ in this cut are in the opposite direction to $e$. It follows from this that $e$ must appear in the cycle $C$.  But then $C$ must consist of precisely the edges in $P$ together with $xy$.
\end{proof}

Based on this simple lemma, we can easily construct digraphs of large outdegree without directed cycles of even length.  Let $F$ be an arborescence with the property that all leaf vertices are distance $2k$ from the root and all non-leaf vertices have outdegree exactly $k$.  Now, we add edges to $F$ to form a new digraph $D$ by the following rule.  For every directed path in $F$ of even length from a vertex $y$ to a leaf vertex $x$, we add the edge $xy$ to $D$.  It is immediate that every vertex in $D$ has outdegree $k$, and it follows from the above lemma that it has no directed cycle of even length.


\begin{thebibliography}{00}


\bibitem{BJ} J.~Bang-Jensen and G.~Gutin.
{\it Digraphs. Theory, Algorithms and Applications},
Springer, 2001.

\bibitem{BM} J.A.~Bondy and U.S.R.~Murty.
{\it Graph Theory},
Springer, New York, 2008.

\bibitem{DKMO} M.~DeVos, K.~Kawarabayashi, B.~Mohar and H.~Okamura.
Immersing small complete graphs,
{\it Ars Math. Contemp.} {\bf 3} (2010), 139--146.

\bibitem{DDFMMS} M.~DeVos, Z.~Dvo\v{r}\'{a}k, J.~Fox, J.~McDonald, B.~Mohar and D.~Scheide.
Minimum degree forcing complete graph immersion,
Submitted to {\it Combinatorica.}


\bibitem{Ed1}
J. Edmonds.
Submodular functions, matroids, and certain polyhedra. Combinatorial Structures and their Applications (Proc. Calgary Internat. Conf., Calgary, Alberta., 1969)
pp. 69--87, Gordon and Breach, New York, 1970.

\bibitem{Ed2}
J. Edmonds.
Some well-solved problems in combinatorial optimization. Combinatorial programming: methods and applications (Proc. NATO Advanced Study Inst., Versailles, 1974),
pp. 285--301. NATO Advanced Study Inst. Ser., Ser. C: Math. and Phys. Sci., Vol. 19, Reidel, Dordrecht, 1975.


\bibitem{KOY}
D. K\"{u}hn, D. Osthus, and A. Young.
A note on complete subdivisions in digraphs of large outdegree,
J. Graph Theory 57 (2008), no. 1, 1--6.

\bibitem{LM} F.~Lescure and H.~Meyniel.
On a problem upon configurations contained in graphs with given chromatic number.
Graph theory in memory of G. A. Dirac (Sandbjerg, 1985), 325--331,
Ann. Discrete Math.~41, North-Holland, Amsterdam, 1989.

\bibitem{Ma}
W. Mader.
Konstruktion aller $n$-fach kantenzusammenh\"angenden Digraphen,
European J. Combin. 3 (1982), no. 1, 63--67.


\bibitem{Ma3} W. Mader. Degree and Local Connectivity in Digraphs, Combinatorica 5 (1985), 161--165.

\bibitem{Ma2}
W. Mader.
Existence of vertices of local connectivity $k$ in digraphs of large outdegree,
Combinatorica 15 (1995), no. 4, 533--539.

\bibitem{Ma4} W. Mader. On Topological Tournaments of order 4 in Digraphs of Outdegree 3, Journal of Graph Theory 21 (1996), 371--376.



\bibitem{Sch} A. Schrijver.
{\it Combinatorial Optimization. Polyhedra and Efficiency.}
Algorithms and Combinatorics, 24, Springer-Verlag, Berlin, 2003.

\bibitem{Th}
C. Thomassen.
Even cycles in directed graphs,
European J. Combin. 6 (1985), no. 1, 85--89.



\end{thebibliography}
\end{document}